\pgfplotsset{compat=1.18}
\newcommand{\ve}{\varepsilon}
\newcommand{\ct}{\mathcal{A}}
\newcommand{\dist}{\textnormal{dist}}
\newcommand{\ff}{F^{f,h}}
\newcommand{\tchi}{\tilde{\chi}}
\newcommand{\rj}{R_{j, \sigma}}
\newcommand{\tv}{\tilde{v}}
\newcommand{\tp}{t^\prime}
\newcommand{\M}{\mathcal{M}}
\newcommand{\rp}{R^\prime}
\newcommand{\tw}{\tilde{w}}
\newcommand{\thh}{\tilde{h}}
\newcommand{\sd}{s_\delta}
\newcommand{\rtt}{\tilde{r}}
\numberwithin{equation}{section}
\theoremstyle{plain}
\newtheorem{thm}{Theorem}[section]
\newtheorem{lem}[thm]{Lemma}
\newtheorem{prop}[thm]{Proposition}
\newtheorem{cor}[thm]{Corollary}
\theoremstyle{definition}
\newtheorem{remark}[thm]{Remark}
\newtheorem{definition}{Definition}
\title{On the stability of an inverse problem for waves via the Boundary Control method}
\author{Spyridon Filippas\footnote{Department of Mathematics and Statistics, University of Helsinki, Helsinki, Finland, email: spyridon.filippas@helsinki.fi}, Lauri Oksanen\footnote{Department of Mathematics and Statistics, University of Helsinki, Helsinki, Finland, email: lauri.oksanen@helsinki.fi}}
\date{\today}
\def\keywords{
    \vspace{1ex}
    \noindent
    \if@twocolumn
      \small{\bf  Keywords}\/---$\!$    \else
      \begin{center}\small\ {\bf Keywords}\end{center}\quotation\small
    \fi}
\def\endkeywords{\vspace{0.6em}\par\if@twocolumn\else\endquotation\fi
    \normalsize\rm}
\def\p{\partial}
\def\R{\mathbb R}
\DeclareMathOperator{\supp}{supp}
\newcommand{\norm}[3]{\left\Vert #1 \right\Vert_{#2}^{#3}}
\begin{document}

\maketitle

\begin{abstract}
 We establish a link between stability estimates for a hyperbolic inverse problem via the Boundary Control method and the blowup of a constant appearing in the contexts of optimal unique continuation and cost of approximate controllability. 
\end{abstract}

\begin{keywords}
  \noindent
Hyperbolic inverse problems, unique continuation, control theory, Boundary Control method
\medskip

\noindent
\textbf{2010 Mathematics Subject Classification:}
35B60, 
35L05, 
 35Q93, 
    35R30 
\end{keywords}

\tableofcontents

\section{Introduction}

\subsection{Background}

We consider the following wave equation:
\begin{equation}
\label{wave equation}
    \begin{cases}
    (\p^2_t-\Delta+q)u=f & \textnormal{in} \:(0,T) \times \R^n,\\
    u_{|t=0}=\p_tu_{|t=0}=0  & \textnormal{in} \: \R^n,
    \end{cases}
\end{equation}
where the potential $q \in C^\infty(\R^n;\R)$ is real valued, independent of the time variable $t$. Let $K \subset \R^n$ be a compact set on which one wants to recover $q$. We consider a (possibly small) observation set $\omega$ which is an open subset $\omega \subset \R^n$ having $C^1$ boundary and satisfying $\dist(K, \omega)>0$. We are interested in the following question: can one recover the values of the potential $q$ in the compact set $K$ in a stable way by using some sources $f$ in $\omega$ and by measuring the resulting waves in $\omega$?

In other words we consider for $f \in L^2((0,T)\times \omega)$ the \textit{source to solution map} $\Lambda_q$ associated to the potential $q$ given by:
\begin{align}
\label{def_of_lambda}
    \Lambda_q(f)=u_{|(0,T) \times \omega},
\end{align}
where $u$ is the solution of~\eqref{wave equation}. Here we write $f \in L^2((0,T)\times \omega)$ for $f\in L^2$ with $\supp(f)\subset (0,T)\times \omega$. We want to show a stability estimate of the form
\begin{align*}
 \norm{q_1-q_2}{*}{} \leq \theta\left( \norm{\Lambda_{q_1}- \Lambda_{q_2}}{**}{}\right),
\end{align*}
for appropriate norms and $\theta$ a modulus of continuity. We will write in the sequel
$$
\Lambda_{q_j}=\Lambda_j, \quad j \in \{1,2\}.
$$

A closely related problem is the one of determining a metric $g$ or a potential $q$ from the knowledge of the \textit{Dirichlet-to-Neumann map} (DN map in short). From a physical point view, this corresponds to determining the properties of a medium by probing it at the~\textit{boundary} and using as data the responses of the medium measured at the boundary. Mathematically speaking, we consider a compact Riemannian manifold $(\M, g)$ and $u$ satisfying
\begin{equation*}
\label{wave equation manifold}
    \begin{cases}
    (\p^2_t-\Delta_g+q)u=f & \textnormal{in} \:(0,T) \times \M,\\
    u_{|t=0}=\p_tu_{|t=0}=0  & \textnormal{in} \: \M, \\
    u=f, & \textnormal{in} \: (0,T) \times \partial M.
    \end{cases}
\end{equation*}
We define then the $DN$ map in a similar fashion by $\Lambda^{g,q}_{DN}(f)=\partial_\nu u_{|(0,T)\times \M}$ and one can ask if the knowledge of the DN map $\Lambda^{g,q}_{DN}$ allows to uniquely determine the metric or the potential, and if yes, if it is possible to give a stability estimate. Concerning the uniqueness question there are essentially two approaches:
\begin{itemize}
    \item The first is based on the Boundary Control method, originating from~\cite{Belishev:87}, and was used to prove that indeed the DN map determines the metric $g$ or the potential $q$ (see e.g the book~\cite{bookMatti}). This result holds without any restriction on the Riemannian metric $g$. In this approach the property of~\textit{unique continuation} for the wave equation plays a key role in the proof.

    \item The second approach is based on the construction of geometric optics solutions. This allows to reduce the problem to the one of studying the injectivity properties of the geodesic ray transform. Since the geodesic ray transform is known to be invertible under certain geometric assumptions (see e.g the book~\cite{paternain2023geometric}) this method gives positive results when $(\M,g)$ satisfy such assumptions. Such results essentially impose that the metric $g$ should be simple\footnote{We recall that these are simply connected $(\M,g)$ with no conjugate points and strictly convex boundary.} or satisfy some milder (but closely related) conditions. 
\end{itemize}

Now the problem of proving~\textit{stability estimates} without any particular assumption on the metric $g$ was not studied until recently. Indeed, to the best of our knowledge the only works in this direction are the ones by Bosi-Kurylev-Lassas~\cite{bosi2017reconstruction} and Burago-Ivanov-Lassas-Lu~\cite{burago2024quantitativestabilitygelfandsinverse}. In these references the authors prove stability estimates of $\log \log$ type in the case where there is no potential. 

In the case where one imposes additional conditions on $g$, so that the geometric optics approach can be used, it seems that the bibliography is much richer. Indeed, when $g$ is the Euclidean metric a Hölder stability estimate was proved by Sun~\cite{sun1990continuous} and Alessandrini-Sun-Sylvester~\cite{alessandrini1990stability} for the determination of the potential. In \cite{stefanov1998stability} Stefanov and Uhlmann proved Hölder stability for the recovery of metrics which are close enough to the Euclidean metric, and subsequently generalized this for generic simple metrics in~\cite{stefanov2005stable}. In the last two works there is no potential involved. In~\cite{MouradDos} Bellassoued and Dos Santos Ferreira consider a fixed simple metric and prove a Hölder stability estimate for the recovery of the potential. We mention as well the work of Arnaiz-Giullarmou~\cite{arnaiz2022stability} where they prove a Hölder stability estimate for the recovery of the potential under a condition on the trapped set of the manifold, which relaxes the simplicity assumption.

As already mentioned, the proofs of these Hölder stability results are based on the geometric optics approach. However, in the problem that we consider in the present paper, in which the DN map is replaced by the source to solution map, this approach cannot be used. This stems roughly speaking from the fact that the geometric optics approach only works when the probing/observation region surrounds in some sense the domain of interest. In our setting this means that, for each point $x \in K$, all the rays (that is, half-lines) from $x$ intersect $\omega$.
In the case of general $K$ and $\omega$, the Boundary Control method is the only known method for proving uniqueness. We consider stability in the general setting, and our proof is based on this method. 

The Boundary Control method heavily relies on unique continuation/approximate controllability properties for the wave equation which turn out to be quite subtle (see Section 2 for more details). It seems natural that a quantitative result based on the Boundary Control should be related to some quantitative result for the unique continuation of the wave operator. In this paper we prove an abstract result which establishes exactly the link between these two: stability for the recovery of a potential from the knowledge of $\Lambda$ on one side and the cost of the approximate control constant $\ct$ related to unique continuation for waves on the other. This is the content of Theorem~\ref{main thm}.

\subsection{Main result and applications}

We denote by $$\mathcal{L}(K, \omega)= \sup_{x \in K}\textnormal{dist}(x,\omega),$$ the largest distance of the subset $\omega$ to a point of $K$. We have the following.
\begin{thm}
\label{main thm}
    Assume that $T>2\mathcal{L}(K, \omega)$ and the potentials $q_j$ satisfy $\norm{q_j}{C^m(\R^n)}{}\leq M, j \in \{1,2\}$ for some $M>0$ and $m >n$. Then given small $\ve>0$ one has the following stability estimate.

If $$
\norm{\Lambda_1-\Lambda_2}{L^2\to L^2}{} \leq \left( \frac{\ve}{\ct(\ve^\ell)}\right)^\ell,
$$
where the operator norm is from $L^2((0,T) \times \omega)$ to itself, then 
$$\norm{q_2-q_1}{L^2(K)}{}\leq C \ve,$$
where:
\begin{itemize}
    \item $C>0$ is a constant depending only on $T,M,K,\omega,n$,

    \item $\ell=\ell(n)>1$ depends only on the dimension $n$,

    \item $\ct=\ct(\ve)$ is the cost of approximate control constant of Theorem~\ref{cor approx control}.
\end{itemize}
\end{thm}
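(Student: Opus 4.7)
The plan is to combine a Blagovestchenskii-type integral identity with the quantitative approximate controllability encoded by $\ct$, and then pass from local averages of $q_1-q_2$ to an $L^2(K)$ bound via a mollification argument exploiting the a priori $C^m$ bound.

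First, I would derive an integral identity relating $\Lambda_1-\Lambda_2$ to the potential difference. Let $u_j^f$ denote the solution of~\eqref{wave equation} for $q_j$ with source $f$. Then $w:=u_1^f-u_2^f$ satisfies $(\p_t^2-\Delta+q_2)w=(q_2-q_1)u_1^f$ with zero Cauchy data and $w|_{(0,T)\times\omega}=(\Lambda_1-\Lambda_2)f$. Pairing with a suitable time-reversed wave $v^g$ associated to $q_2$ and integrating by parts yields an identity of the form
\begin{equation*}
\int_0^T\!\!\int_{\R^n}(q_1-q_2)\,u_1^f\,v^g\,dx\,dt \;=\; \langle (\Lambda_1-\Lambda_2)f,\,g\rangle_{L^2((0,T)\times\omega)} + (\text{controllable remainder}).
\end{equation*}

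Second, for each base point $x_0\in K$ and small radius $r>0$, I would invoke Theorem~\ref{cor approx control} to select $f,g$ such that the space-time product $u_1^f\,v^g$, integrated against a time cutoff, approximates a bump concentrated on $B(x_0,r)$ with $L^2$ error at most $\ve'$, at cost $\|f\|_{L^2},\|g\|_{L^2}\lesssim\ct(\ve')$. The hypothesis $T>2\mathcal{L}(K,\omega)$ is precisely what guarantees, via finite speed of propagation, that $B(x_0,r)$ belongs to the reachable set from $\omega$ in time $T/2$. Inserting this in the previous identity gives
\begin{equation*}
\left|\int_{B(x_0,r)}(q_1-q_2)\,\psi\,dx\right| \;\lesssim\; \ve' \,+\, \norm{\Lambda_1-\Lambda_2}{L^2\to L^2}{}\,\ct(\ve')^2,
\end{equation*}
for a suitable smooth non-degenerate weight $\psi$.

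Finally, varying $x_0$ over a net in $K$ converts this into pointwise control of a mollification $(q_1-q_2)*\chi_r$, and I would split
\begin{equation*}
\norm{q_1-q_2}{L^2(K)}{} \le \norm{(q_1-q_2)*\chi_r}{L^2(K)}{} + \norm{(q_1-q_2)-(q_1-q_2)*\chi_r}{L^2(K)}{}.
\end{equation*}
The first term is bounded by the previous step, losing at most a factor $r^{-n/2}$ from summing over the net, while the second is bounded by $r^m\|q_1-q_2\|_{C^m}\le 2Mr^m$ thanks to the a priori $C^m$ bound. Optimizing $r$ as a power of $\ve'$, and then choosing $\ve'\simeq\ve^\ell$ so that the eventual $L^2(K)$ bound is of order $\ve$, produces the exponent $\ell=\ell(n)>1$; the requirement $m>n$ corresponds to the Sobolev-type loss $r^{-n/2}$ being strictly defeated by the regularity gain $r^m$.

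The main obstacle is the careful bookkeeping in this last step: the interplay between the approximation scale $\ve'$, the mollification scale $r$ and the control cost $\ct(\ve')$ must be arranged so that the hypothesis involves exactly $\ct(\ve^\ell)^\ell$ rather than a worse combination. A secondary technical point is producing simultaneously the two waves $u_1^f$ and $v^g$ attached to the \emph{different} potentials $q_1,q_2$ with targets concentrated near $x_0$ but a single uniform control cost; this is where the uniform a priori $C^m$ bound on the $q_j$ enters to close the loop.
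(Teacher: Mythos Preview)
Your Step 2 contains a genuine gap. Theorem~\ref{cor approx control} gives you, for a prescribed target $v$ and a single time $s$, a source $f$ with $\norm{u^f(s)-v}{L^2(M(\omega,s))}{}\le\ve'\norm{v}{H^1}{}$; it says nothing about $u^f(t)$ for $t\ne s$, and nothing whatsoever about space--time products of two solutions. There is no mechanism by which approximate controllability alone forces $\int_0^T u_1^f(t,x)\,v^g(t,x)\,\chi(t)\,dt$ to approximate a spatial bump on $B(x_0,r)$: even if you arrange $u_1^f(s)$ and $v^g(s)$ to concentrate near $x_0$ at one instant, the integral identity you derived sees all of $(0,T)$, where both waves are completely uncontrolled. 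This is not a bookkeeping issue; it is the central difficulty of the problem, and your proposal does not address it.

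The paper avoids this precisely by \emph{not} working with the space--time integral identity you wrote down. Instead it uses the Blagove\v{s}\v{c}enskii identity (Lemma~\ref{the blagovetc identity}) to express \emph{fixed-time} inner products $(u_j^f(t),u_j^h(t'))_{L^2(\R^n)}$ directly in terms of $\Lambda_j$, so that approximate controllability at a single time becomes exactly the right tool. The localization to small sets is then achieved not by targeting a bump, but through a regularized minimization (Lemma~\ref{minimazation prblem properties}, Proposition~\ref{prop difference of Bs}) exploiting the orthogonality identity
\[
\norm{u^g(t)-u^f(t)}{L^2}{2}=\norm{u^g(t)-\mathds{1}_{M(\omega,s)}u^f(t)}{L^2}{2}+\norm{(1-\mathds{1}_{M(\omega,s)})u^f(t)}{L^2}{2},
\]
valid because $\supp u^g(t)\subset M(\omega,s)$ by finite speed of propagation. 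This lets one insert indicator functions of domains of influence into the fixed-time inner products; intersecting and shrinking such domains to a point (Proposition~\ref{prop point values}) recovers pointwise products $u_1^f\overline{u_1^h}-u_2^f\overline{u_2^h}$, and only afterwards do local geometric optics solutions (Proposition~\ref{prop diff of sol}) extract $u_1^f-u_2^f$ and thence $q_1-q_2$ via the PDE. Your mollification/interpolation endgame in Step~3 is broadly in the right spirit, but it cannot be reached without first replacing Step~2 by this machinery.
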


\bigskip

In light of Theorem~\ref{main thm} it becomes interesting to study the behavior of the quantity $\ct(\ve)$ as $\ve$ approaches $0$. Partly motivated by this, in the companion paper~\cite{FO25explicit} we study this problem in the particular case where the control region is a ball of radius $r$. This allows to obtain a stability result when the observation takes place on a \textit{half-space}. 

Let us denote by $H$ the half-space $\{x_1>0\}$ in $\R^n_x$. Let $q_{j} \in C^\infty (\R^n; \R), j\in\{1,2\}$ and $K$ a compact set of $\{x_1<0\}$. Consider a source $F$ supported in $(0,T)\times H$ and the source to solution map $\Lambda_{q_j}(F)=u_{|(0,T)\times H}$. We have the following.

\begin{thm}
\label{thm_inverse_half}
Assume that the potentials $q_j$ satisfy $\norm{q_j}{C^m(\R^n)}{}\leq M, j \in \{1,2\}$ for some $M>0$ and $m >n$. Then for $T>2\mathcal{L}(K, H)$ there exist $C>0$ and $\alpha \in (0,1)$ depending only on $T,K,M$ with 
\begin{align*}
    \norm{q_2-q_1}{L^2(K)}{}\leq \frac{C}{\left(\log |\log \norm{\Lambda_1-\Lambda_2}{L^2\to L^2}{} |\right)^\alpha}.
\end{align*}
\end{thm}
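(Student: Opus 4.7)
The plan is to deduce Theorem~\ref{thm_inverse_half} from Theorem~\ref{main thm} applied with observation set $\omega=H$, together with the explicit quantitative bound on the cost-of-approximate-control constant $\ct(\ve)$ in the half-space geometry provided by the companion paper~\cite{FO25explicit}.

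First I would verify that the hypotheses of Theorem~\ref{main thm} are satisfied in this setting. The half-space $H=\{x_1>0\}$ is open with smooth boundary $\{x_1=0\}$; since $K\subset\{x_1<0\}$ is compact we have $\dist(K,H)=\inf_{x\in K}|x_1|>0$ and $\mathcal{L}(K,H)=\sup_{x\in K}|x_1|<\infty$, so the time condition $T>2\mathcal{L}(K,H)$ is meaningful. Although $H$ is unbounded, by finite speed of propagation all the relevant wave data live inside a bounded region determined by $T$ and $K$, so the analysis reduces to a bounded setting without loss. Theorem~\ref{main thm} then provides the conditional implication: if
$$
\|\Lambda_1-\Lambda_2\|_{L^2\to L^2}\leq \Bigl(\frac{\ve}{\ct(\ve^\ell)}\Bigr)^\ell,
$$
then $\|q_1-q_2\|_{L^2(K)}\leq C\ve$, where $\ct$ is the cost of approximate control associated to $\omega=H$.

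Second, I would invoke the bound established in~\cite{FO25explicit}, which for the desired doubly-logarithmic conclusion must take the form
$$
\ct(\ve)\leq \exp\bigl(\exp(C\ve^{-\beta})\bigr),
$$
for some $\beta>0$ and $C>0$ depending only on $T$, $K$, $M$, $n$. Setting $\eta=\|\Lambda_1-\Lambda_2\|_{L^2\to L^2}$ and substituting this bound into the smallness hypothesis gives $\eta\leq \ve^\ell\exp\!\bigl(-\ell\exp(C\ve^{-\beta\ell})\bigr)$. Taking $-\log$ twice yields
$$
\log|\log \eta|\geq c\,\ve^{-\beta\ell}-C',
$$
which inverts into $\ve\leq C(\log|\log \eta|)^{-\alpha}$ with $\alpha=1/(\beta\ell)\in(0,1)$. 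Combined with the conclusion of Theorem~\ref{main thm}, this is exactly the asserted stability estimate.

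The main obstacle is not Theorem~\ref{main thm} itself, which is taken as a black box, but rather the derivation of a sharp, explicit double-exponential bound for $\ct(\ve)$ when the observation region is a half-space. This is the content of~\cite{FO25explicit} and typically requires Carleman estimates, spectral inequalities of Lebeau--Robbiano type, or frequency-dependent unique-continuation arguments, with careful tracking of how constants degenerate as $\ve\to 0$. Once that bound is in hand, the remaining steps --- checking the hypotheses of Theorem~\ref{main thm}, handling the unboundedness of $H$ via finite speed of propagation, and performing the double-logarithmic inversion --- are essentially bookkeeping.
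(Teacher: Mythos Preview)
Your overall strategy---apply Theorem~\ref{main thm}, insert an explicit double-exponential bound for $\ct(\ve)$ from~\cite{FO25explicit}, then invert---matches the paper's. The log-log inversion and the handling of the constants are fine.

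There is, however, one point where your proposal diverges from the paper and where, as written, it has a gap. You invoke~\cite{FO25explicit} as supplying the cost constant $\ct(\ve)$ ``associated to $\omega=H$'', i.e.\ for the half-space directly. But as stated in the introduction, the companion paper treats the case where the control region is a \emph{ball}, not a half-space. The paper's proof closes this gap by a structural observation: tracing through the proof of Theorem~\ref{main thm}, the approximate-controllability result (Theorem~\ref{cor approx control}) is in fact only ever invoked in Proposition~\ref{prop point values}, and there the sets $\omega'$ used are balls $B(-r\nu(y),r)\subset H$ of uniformly controlled radius. Hence the ball estimate of~\cite{FO25explicit} (combined with the Robbiano cost-of-control argument) is exactly what is needed, and one obtains $\ct(\ve)=e^{e^{C/\ve}}$ for those balls. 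Your proposal, by contrast, black-boxes Theorem~\ref{main thm} with $\omega=H$ and then needs $\ct$ for $H$ itself, which~\cite{FO25explicit} does not directly provide.

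You could repair this either by reproducing the paper's observation that only balls are required, or by arguing that the ball estimate implies a half-space estimate (a control supported in a ball inside $H$ is in particular a control from $H$, though one must check the domains of influence match up). Either way, the missing ingredient is the reduction from half-space to balls.
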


Theorem~\ref{thm_inverse_half} is a consequence of Theorem~\ref{main thm} and \cite[Theorem 1.1]{FO25explicit}. We give the details in Section~\ref{section_end_proofs}.

\section{Finite speed of propagation and unique continuation}
\label{sec_finite and uc}

We recall here some classical facts on the wave equation that will be crucial in the sequel. We write $u^f_j,$ $j\in \{1,2\}$, for the solution of~\eqref{wave equation} associated to the potential $q_j$ with source term $f$. We also denote $u(t):=u(t, \cdot)$.

\begin{definition}
    For $s>0$ and $\omega \subset \R^n$ an open set we define the subset
    $$
    M(\omega,s):=\{x \in \R^n: \textnormal{dist}(x, \omega) \leq s\}
    $$
The set $M(\omega,s)$ is called the \textit{domain of influence} of $\omega$ at time $s$.
\end{definition}
We then have:
\begin{thm}[Finite speed of propagation for waves]
\label{finite speed}
Let $\omega \subset \R^n$ and $u^f$ be a solution of~\eqref{wave equation} with a source $f \in H^1((0,T) \times \R^n)$ satisfying $\supp(f) \subset (0,T] \times \omega$ and a potential $q \in L^\infty(\R^n)$. Then $\supp (u^f(s ))\subset M(\omega,s)$.
\end{thm}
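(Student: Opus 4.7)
The plan is to establish the support inclusion via a local energy estimate on a backward light cone --- the classical domain-of-dependence argument --- adapted to accommodate the zeroth-order term $q u^f$. Fix $x_0 \in \R^n$ with $r := \dist(x_0,\omega) > s$; it suffices to show $u^f(s,x_0) = 0$. I would work on the truncated backward cone
\[
C := \{(t,x) \in [0,s] \times \R^n : |x - x_0| \le r - t\}
\]
and define the local energy
\[
E(t) := \tfrac12 \int_{B(x_0,\, r - t)} \bigl(|\p_t u^f|^2 + |\nabla u^f|^2 + |u^f|^2\bigr)\,dx, \qquad t \in [0,s].
\]

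First I would verify that $C$ does not meet $\supp(f)$: for any $t \in [0,s]$ the ball $B(x_0, r-t)$ is disjoint from $\omega$, since $\dist(x_0,\omega) = r > r-t$. Consequently, on $C$ the solution satisfies the homogeneous equation $(\p_t^2 - \Delta + q) u^f = 0$. Differentiating $E$ in $t$ (taking into account the shrinking domain) and integrating by parts in space yields
\[
E'(t) = \int_{B(x_0,r-t)} \p_t u^f \bigl(\p_t^2 u^f - \Delta u^f + u^f\bigr)\,dx + \int_{\partial B(x_0,r-t)} \Bigl(\p_\nu u^f\, \p_t u^f - \tfrac12 |\p_t u^f|^2 - \tfrac12 |\nabla u^f|^2 - \tfrac12 |u^f|^2\Bigr)\,dS.
\]
The crucial step --- where the unit slope of the cone is essential --- is that the boundary integrand is pointwise nonpositive: by Cauchy--Schwarz $\p_\nu u^f\, \p_t u^f \le \tfrac12 |\p_\nu u^f|^2 + \tfrac12 |\p_t u^f|^2 \le \tfrac12 |\nabla u^f|^2 + \tfrac12 |\p_t u^f|^2$. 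Substituting $\p_t^2 u^f - \Delta u^f = -q u^f$ in the interior integral and applying Cauchy--Schwarz once more gives
\[
E'(t) \le \bigl(\|q\|_{L^\infty} + 1\bigr)\, E(t), \qquad t \in [0,s].
\]

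Since $E(0) = 0$ by the zero initial data, Gronwall's inequality forces $E \equiv 0$ on $[0,s]$; in particular $u^f(s, \cdot)$ vanishes on a neighbourhood of $x_0$, so $u^f(s,x_0) = 0$. As $x_0$ was arbitrary with $\dist(x_0,\omega) > s$, I conclude $\supp u^f(s) \subset M(\omega,s)$. The only real technicality I anticipate is justifying the above manipulations at the stated $H^1$ regularity of $f$: the solution has enough regularity for the energy identity to make sense, but if one prefers to be safe one first approximates $f$ by smooth compactly supported data, applies the estimate to the resulting smooth solutions, and passes to the limit using continuous dependence in the energy norm. This bookkeeping step is the main place where care is required, but it poses no genuine difficulty.
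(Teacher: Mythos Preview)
Your argument is correct: the backward-cone energy estimate with Gronwall is exactly the classical domain-of-dependence proof, and your handling of the potential term and the regularity bookkeeping is sound. The paper itself does not give a proof of this theorem; it simply cites \cite[Theorem~2.47]{bookMatti}, where essentially the same energy argument appears, so there is nothing further to compare.
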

We refer to~\cite[Theorem 2.47]{bookMatti} for a proof of this classical result.

\bigskip

The property of \textit{unique continuation} can be seen as a reciprocal of the finite speed of propagation and it consists in asking the following question: if a solution of the wave equation vanishes identically on some open set $\omega$ for time $T>0$, does it vanish everywhere? It is interesting to note that in striking contrast with the property of finite speed of propagation which is based on energy estimates, with a proof that is essentially the same in all dimensions, the property of unique continuation turns out to be much more intricate. A seminal result for the wave equation was achieved in~\cite{Robbiano:91} where the property of unique continuation from $(-T,T) \times \omega$ is proved as soon as $\omega \neq \emptyset$ and $T>0$ is sufficiently big. The minimal time of observation $T$ was subsequently improved in~\cite{Hormander:92}. However, the Boundary Control method relies crucially in the interplay between finite speed of propagation and unique continuation and one really needs a unique continuation result in~\textit{optimal time}. This was achieved by Tataru in~\cite{Tataru:95} and eventually led to a very general unique continuation result: the Tataru-Robbiano-Zuily-Hörmander~\cite{Tataru:95, tataru1999unique, RZ:98,Hor:97} Theorem. We refer to~\cite{LL:23notes} for an introduction to this result in the particular case of the wave equation. Here, we shall use the following:

\begin{thm}[Unique continuation]
\label{thm unique continuation}
   Let $\omega \subset \R^n$ be a non empty open set. Let $s>0$ and $u \in H^1_{\textnormal{loc}}((-s,s)\times \R^n)$ satisfying $(\p^2_t-\Delta+q)u=0$ on $(-s,s)\times \R^n$. Assume that $u_{|(-s,s) \times \omega}=0$. Then $u(0)_{|M(\omega,s)}=0$.
\end{thm}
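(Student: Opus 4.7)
The plan is to derive the result from Tataru's sharp--time Carleman estimate applied along a family of pseudo-convex hypersurfaces adapted to the wave operator, following the strategy of~\cite{Tataru:95, RZ:98, Hor:97}. Since this is a classical result, I would not attempt to reprove it from scratch but rather recall the architecture of the argument, since exactly that architecture (local step $+$ propagation through a foliation) is what dictates the constants that reappear later in the paper.

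First, I would recall the local unique continuation principle: if $v$ satisfies $(\p^2_t-\Delta+q)v=0$ in a neighborhood of a point $p$ and vanishes on one side of a $C^2$ oriented hypersurface $\Sigma \ni p$ which is strongly pseudo-convex with respect to $\p^2_t-\Delta$ at $p$, then $v$ vanishes in a neighborhood of $p$ on the other side. This follows by applying to a cutoff $\chi v$ a Carleman estimate of the form
\[
\tau \int e^{2\tau\phi} |w|^2 \,dt\,dx \leq C \int e^{2\tau\phi} |(\p^2_t-\Delta)w|^2 \,dt\,dx,
\qquad \tau \gg 1,
\]
where the weight $\phi$ has $\{\phi=\phi(p)\}=\Sigma$ near $p$. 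The potential $q$ is absorbed into the left-hand side by choosing $\tau$ large enough, so it plays no real role at this step.

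Second, I would introduce a foliation adapted to the geometry of the problem. Write $d(x)=\dist(x,\omega)$, pick a smooth approximation $\psi$ of $d^2$, and for $\beta \in (0,1)$ close to $1$ consider
\[
\Sigma_c^\beta = \{(t,x)\in(-s,s)\times \R^n : \psi(x)-\beta t^2 = c\}.
\]
These surfaces are spacelike for $\p^2_t-\Delta$, and after a standard convexification $\phi \mapsto e^{\lambda\phi}$ they satisfy Tataru's refinement of Hörmander's pseudo-convexity condition. Crucially, letting $\beta \uparrow 1$ lets the foliation sweep out the full optimal Lorentzian diamond $\{(t,x) : d(x)+|t|<s\}$, which is precisely what produces the sharp time of the theorem.

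Third, the propagation step: starting from $u\equiv 0$ on $(-s,s)\times \omega$, I would iteratively apply the local step across the surfaces $\Sigma_c^\beta$ and invoke a connectedness argument on the parameter $c$ to conclude $u\equiv 0$ on $\{(t,x): d(x)+|t|<s\}$ for every $\beta<1$. Taking first $\beta\to 1$, then $t=0$, and finally passing from $\{d(x)<s\}$ to $\{d(x)\le s\}=M(\omega,s)$ by $H^1_{\textnormal{loc}}$ continuity yields the claim. The main obstacle is the sharpness: producing a pseudo-convex weight whose level sets cover the \emph{entire} optimal diamond, rather than some strictly smaller region, is exactly the content of Tataru's theorem and is sensitive to the behaviour of the weight along null characteristics of $\p^2_t-\Delta$; once the right weight is found, the rest of the argument is essentially bookkeeping.
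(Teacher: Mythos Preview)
The paper does not actually prove Theorem~\ref{thm unique continuation}; it simply states it and attributes it to the Tataru--Robbiano--Zuily--H\"ormander theorem, referring to \cite{Tataru:95, tataru1999unique, RZ:98, Hor:97} and to \cite{LL:23notes} for an exposition. So there is nothing to compare your argument against on the paper's side: both you and the paper treat this as a black box, and your outline is precisely the architecture of the cited proofs.

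One small technical caveat on your sketch: the displayed Carleman inequality is the classical H\"ormander-type estimate, and that by itself is \emph{not} enough here, since the relevant level sets become characteristic for $\p_t^2-\Delta$ as $\beta\uparrow 1$ and strong pseudo-convexity fails there. Tataru's actual estimate exploits the time-independence of the coefficients via a Gaussian regularization in the dual time variable (or equivalently a partial-analyticity argument), and it is this modified estimate, not the classical one, that is stable up to the null foliation and yields the sharp time. You clearly know this (you flag ``Tataru's refinement'' and the sensitivity ``along null characteristics''), so this is only a matter of not letting the displayed formula mislead a reader into thinking the standard Carleman estimate suffices.
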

We remark that the domain where unique continuation is achieved is precisely the maximal domain so that finite speed of propagation is not violated and in this sense the result above is optimal, see e.g~\cite{russell1971boundarya,russell1971boundaryb}. 

Theorem~\ref{thm unique continuation} is in fact an injectivity result which in turn yields by duality a density result called \textit{approximate controllability}. We shall use the following formulation, which introduces an abstract constant $\ct=\ct(\ve)$ corresponding to the~\textit{cost of the control}.

\begin{thm}[Approximate controllability]
\label{cor approx control}
  Let $\omega \subset \R^n$ be a non empty open set and $s \in (0,T]$. Then for all $v \in H^1(M(\omega,s))$ and any precision $\varepsilon>0$ there exists $f_{\varepsilon} \in C^\infty_0((0,s)\times \omega)$ such that 
  $$
  \norm{u^{f_\ve}(s)-v}{L^2(M(\omega,s))}{}\leq \varepsilon \norm{v}{H^1(M(\omega,s))}{},
  $$
  with $u^{f_\ve}$ solution of~\eqref{wave equation}. We denote by $\ct(\ve)$ the cost of the approximate control satisfying 
  $$
  \norm{f_\ve}{L^2(M(\omega,s))}{}\leq \ct(\ve) \norm{v}{H^1(M(\omega,s))}{}.
  $$
\end{thm}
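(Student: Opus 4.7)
The proof proceeds by the standard Hilbert Uniqueness Method, extracting approximate controllability from the unique continuation result by duality and then quantifying the cost via a penalized variational problem.

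First I would introduce the control operator $T : L^2((0,s)\times\omega) \to L^2(M(\omega,s))$, $Tf = u^f(s)$, whose range sits in $L^2(M(\omega,s))$ by Theorem~\ref{finite speed}. A Lagrange identity identifies its Hilbert adjoint: for $w \in L^2(M(\omega,s))$ extended by zero outside, $T^*w = \phi|_{(0,s)\times\omega}$, where $\phi$ solves $(\p^2_t-\Delta+q)\phi=0$ on $(0,s)\times\R^n$ with terminal data $\phi(s)=0$, $\p_t\phi(s)=w$. I would then verify that $T^*$ is injective, which in the Hilbert space setting is equivalent to $\overline{\mathrm{Range}(T)} = L^2(M(\omega,s))$. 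Given $w\in\ker T^*$, the corresponding $\phi$ vanishes on $(0,s)\times\omega$; setting $\psi(t)=\phi(s-t)$, the compatibility $\p_t^2\psi(0)=(\Delta-q)\psi(0)=0$ (since $\psi(0)=0$) makes the derivative $\chi=\p_t\psi$ extendable by even reflection across $t=0$ to a function on $(-s,s)\times\R^n$ that still solves the wave equation and vanishes on $(-s,s)\times\omega$; Theorem~\ref{thm unique continuation} applied to $\chi$ then forces $\chi(0)=-w=0$ on $M(\omega,s)$.

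To extract the cost, I would minimize the Fenchel-type functional
\begin{equation*}
J_\ve(w) = \tfrac{1}{2}\norm{T^*w}{L^2((0,s)\times\omega)}{2} + \ve\norm{v}{H^1(M(\omega,s))}{}\,\norm{w}{L^2(M(\omega,s))}{} - \langle v,w\rangle_{L^2(M(\omega,s))}
\end{equation*}
over $w\in L^2(M(\omega,s))$. Convexity and continuity are immediate; coercivity is the heart of the argument: on a sequence with $\norm{w_n}{L^2}{}\to\infty$ and $J_\ve(w_n)/\norm{w_n}{L^2}{}$ bounded, the quadratic term would force the weak limit $\hat w$ of $w_n/\norm{w_n}{L^2}{}$ to satisfy $T^*\hat w=0$, hence $\hat w=0$ by injectivity; the linear term then vanishes in the limit, leaving the $\ve\norm{v}{H^1}{}$ penalty as a lower bound, a contradiction. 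A minimizer $\hat w$ therefore exists and satisfies the Euler equation $TT^*\hat w+\ve\norm{v}{H^1}{}\hat w/\norm{\hat w}{L^2}{}=v$, so $f_0 := T^*\hat w$ yields $\norm{Tf_0-v}{L^2}{}\le\ve\norm{v}{H^1}{}$; the cost $\ct(\ve)$ is defined as the smallest constant for which $\norm{f_0}{L^2}{}\le \ct(\ve)\norm{v}{H^1}{}$ holds uniformly in $v$.

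Finally, to upgrade $f_0 \in L^2$ into an approximant $f_\ve \in C^\infty_0((0,s)\times\omega)$, I would mollify in space-time, letting the continuity of $T$ on $L^2$ absorb the extra discrepancy into a slight tightening of $\ve$. The most delicate step is the reflection used to prove injectivity of $T^*$: since Theorem~\ref{thm unique continuation} demands a two-sided time interval, one must reflect the time derivative of the adjoint solution (rather than $\phi$ itself) in order to recover the trace $w$ from the vanishing.
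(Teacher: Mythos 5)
Your overall route---duality between approximate controllability and unique continuation, a reflection in time to double the observation interval so that Theorem~\ref{thm unique continuation} applies, and a Lions-type penalized functional to produce the control---is the standard one and coincides with what the paper actually does: the paper does not prove Theorem~\ref{cor approx control} in full, but Remark~\ref{remark on time of control} contains exactly your duality-plus-reflection argument. The one structural difference is cosmetic: the paper extends the adjoint state $w$ itself by \emph{odd} reflection across $t=s$ (legitimate because $w(s)=0$, and preserving $H^1$), whereas you extend the time derivative $\chi=\p_t\psi$ by \emph{even} reflection; since the derivative of the odd extension is the even extension of the derivative, these are the same object. Your variant has the advantage that the conclusion $\chi(0)|_{M(\omega,s)}=-w=0$ falls out of the center-slice statement of Theorem~\ref{thm unique continuation} as literally written, while the paper's variant needs vanishing in the whole double cone to recover $\p_t w$ at $t=s$. (Minor point: with terminal data $\phi(s)=0$, $\p_t\phi(s)=w$, the Lagrange identity gives $\langle Tf,w\rangle=-\langle f,\phi\rangle$, so $T^*w=-\phi|_{(0,s)\times\omega}$; the sign is immaterial for injectivity.)

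Two points in your write-up are genuine gaps. First, Theorem~\ref{thm unique continuation} is stated for $u\in H^1_{\textnormal{loc}}$, but your $\chi=\p_t\psi$ has only the regularity $C([0,s];L^2)\cap C^1([0,s];H^{-1})$ when $w\in L^2$, so you cannot apply the theorem to $\chi$ directly. This is fixable---mollify in time (the potential is $t$-independent, so time-convolution preserves the equation and the vanishing on a slightly shrunk interval), or apply unique continuation to the $H^1$ odd extension of $\psi$ itself and invoke the full-diamond version of Tataru's theorem---but as written the step does not go through. Second, and more importantly for the statement being proved: you \emph{define} $\ct(\ve)$ as the smallest constant with $\norm{f_0}{L^2}{}\leq \ct(\ve)\norm{v}{H^1}{}$ uniformly in $v$, but nothing in your variational argument shows this supremum is finite. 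The bound $J_\ve(\hat w)\leq J_\ve(0)=0$ only yields $\norm{T^*\hat w}{L^2}{2}\leq 2\norm{v}{L^2}{}\norm{\hat w}{L^2}{}$ with no a priori control on $\norm{\hat w}{L^2}{}$. Finiteness of the cost for fixed $\ve$ is a theorem, not a definition; it follows from approximate controllability by a Baire category (uniform boundedness) argument applied to the closed convex sets $E_k=\{v\in B_{H^1}:\ \dist_{L^2}(v,\overline{T(B_k)})\leq\ve\}$, whose union exhausts the unit ball of $H^1(M(\omega,s))$, or alternatively from the quantitative unique continuation estimates of the references the paper cites. Without one of these, the quantity $\ct(\ve)$ appearing in the statement is not known to exist.
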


Corollary~\ref{cor approx control} states that one can use the source $f$ (the control) in order to drive the solution from $0$ at time $t=0$ to $\varepsilon$ close to $v $ at time $t=s$. Notice that the support of $f$ and the finite speed of propagation of Theorem~\ref{finite speed} guarantee that $u^f(s)$ is indeed supported in $M(\omega,s)$. 

By duality, as proved in~\cite{Robbiano:95}, a quantitative unique continuation result allows to obtain a bound on the cost of the approximate control for the wave equation. This was achieved with an optimal stability in~\cite{Laurent_2018} and in an almost optimal way but with more explicit dependency on different geometric parameters in~\cite{BKL:16,bosi2018stability}. However, in these references the unique continuation (or equivalently control) is achieved in a slightly smaller domain than the optimal one. We refer to the introduction of the companion paper~\cite{FO25explicit} for more details.

\begin{remark}
\label{remark on time of control}
In classical control theory one is also interested in controlling the derivative $\p_t u_{|t=s}$ of the solution of~\eqref{wave equation} at time $t=s$. However, this is not needed for the Boundary Control method and it is in fact of crucial importance that we only need to control $u(s)$. Indeed, in classical control theory, approximate controllability at time $s$ from an open set $\omega$ is~\textit{equivalent} to unique continuation in time $s$ from $\omega$, see for instance~\cite[Section 1.1.2]{LL:23notes}. Theorem~\ref{thm unique continuation} asserts that the minimal time for unique continuation to hold from the set $\omega$ in $M(\omega,s)$ is $2s$, whereas in Corollary~\ref{cor approx control} we only need time $s$. This is due precisely to the fact that we only drive the solution $u$ and not its time derivative. Let us illustrate why. 

We write $\M=M(\omega,s)$ and define the final value map by
\begin{align*}
    F_s: L^2((0,s); H^{-1}(\omega)) & \to L^2(\M) \\
    f & \mapsto u^f(s).
\end{align*}
Let now $h \in L^2(\M)$ and consider $w$ the solution of
\begin{equation}
\label{system for adjoint}
 \begin{cases}
    (\p^2_t-\Delta+q)w=0 & \textnormal{in} \:(0,s) \times \M,\\
    w_{|t=s}=0, & \textnormal{in} \: \M\\
    \p_t w_{|t=s}=-h  & \textnormal{in} \: \M,
    \end{cases}   
\end{equation}
with Dirichlet boundary conditions. Integrating by parts in space and time the equality $$\int \int_{(0,s)\times \M} (\p^2_t-\Delta+q) uw dtdx = \int \int_{(0,s)\times \M} fw dt dx, $$ yields
$$
(u(s),h)_{L^2(\M)}=(f,u_{|(0,s)\times \omega})_{L^2;H^{-1},L^2;H^1}.
$$
In other words the adjoint $F^*_s$ of $F_s$ can be identified with the map
\begin{align*}
    F^*_s: L^2(\M) & \to L^2((0,s); H^{1}(\omega))   \\
    h & \mapsto w_{|(0,s)\times \omega},
\end{align*}
with $w$ solution of~\eqref{system for adjoint}. Then the approximate controllability property, that is $\overline{\textnormal{range}(F_s)}=L^2(\M)$ is equivalent to $\textnormal{ker}(F^*_s)=\{0\}$. Let $h \in \textnormal{ker}(F^*_s) $. We need to show the unique continuation property 
$$
\begin{cases} 
 (\p^2_t-\Delta+q)w=0  \:   \textnormal{in } (0,s) \times \M \\ w=0 \:   \textnormal{in}  \:  (0,s) \times \omega 
\end{cases}
\Longrightarrow w \equiv 0.
$$
Notice that the time $s$ is not sufficient to apply Theorem~\ref{thm unique continuation}. The key point is that since $w$ satisfies $w(s)=0$ (condition that we were able to impose exploiting the fact that we do not need to control the time derivative $\p_t u$) we can consider the extension given by $\tilde{w}(t)=-w(2s-t)$ for $t \in [s,2s]$. Then $\tilde{w} \in H^1((0,2s) \times \M)$ and satisfies 
$$
\begin{cases} 
 (\p^2_t-\Delta+q) \tilde{w}=0  \:   \textnormal{in } (0,2s) \times \M \\ \tilde{w}=0 \:   \textnormal{in}  \:  (0,2s) \times \omega,
\end{cases}
$$
and we can now use unique continuation from $\omega$ to $\M= M(\omega,s)$ in minimal time $2s$ to conclude.
\end{remark}

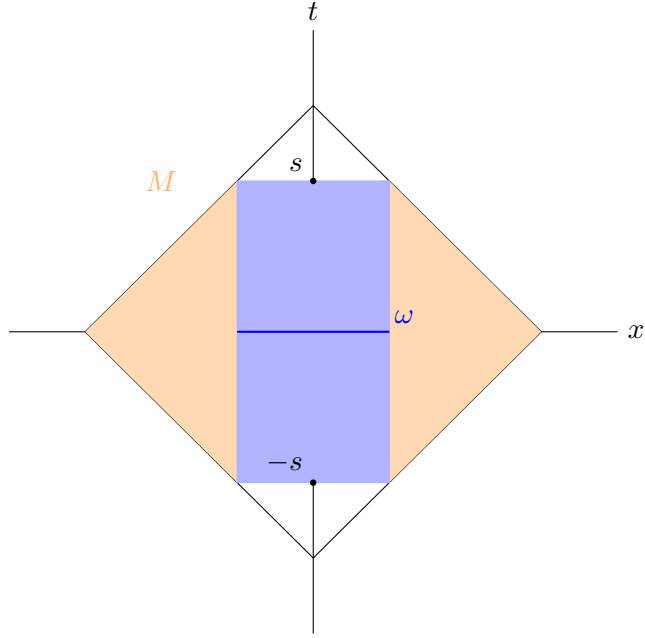
\begin{figure}
	\centering
	
	\begin{tikzpicture}
\draw (0,-4) -- (0,4) node[above] {$t$} ;
\draw (-4,0) -- (4,0) node[right] {$x$};

\draw (-3,0) -- (0,3) ;
\draw (0,3) -- (3,0) ;
\draw (3,0) -- (0,-3) ;
\draw (0,-3) -- (-3,0) ;


\fill[fill=orange!30,semitransparent] (-3+0,0) -- (-1,2-0)-- (-1,-2+0)--(-3+0,0) ;

\fill[fill=orange!30,semitransparent] (3+0,0) -- (1,2-0)-- (1,-2+0)-- (3+0,0) ;

\filldraw[blue!30, semitransparent]  (-1,-2) rectangle (1,2) ;
\draw [blue, thick] (-1,0) -- (1,0);

\node at (1.2,0.2) [blue] {$\omega$};

\node at (-2,2) [orange!60] {$M$};

\filldraw[black] (0,2) circle (1pt) node[anchor=south east]{$s$};
\filldraw[black] (0,-2) circle (1pt) node[anchor=south east]{$-s$};

\end{tikzpicture}
\caption{A solution of the wave equation that vanishes on $(-s,s)\times \omega$ has to vanish in the set $M$. This is the largest diamond above. The domain $M(\omega,s)$  is $M\cap \{t=0\}$.}
\label{cone and delta}
\end{figure}

\section{Proof of the stability estimate}

\subsection{Stable recovery of the inner products}

We recall that the source to solution operator $\Lambda_q$ is continuous,
$$
\Lambda_q: H^k_0((0,T) \times \omega) \mapsto H^{k+1}((0,T) \times \omega).
$$
In the sequel we will however see $\Lambda_q$ as an operator in $L^2$. Its adjoint is given by the following lemma:

\begin{lem}
    \label{adjoint of lambda}
Let us denote by $Ru(t):=u(T-t)$ the time reversal operator on $(0,T)$. Then one has $\Lambda^*_q=R\Lambda_{q}R$. 
\end{lem}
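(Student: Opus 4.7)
The plan is to prove the identity by the standard testing/duality computation based on the formal self-adjointness of the wave operator $P := \partial_t^2 - \Delta + q$. I fix arbitrary sources $f_1, f_2 \in L^2((0,T)\times\omega)$ and aim to show that $(\Lambda_q f_1, f_2)_{L^2} = (f_1, R\Lambda_q R f_2)_{L^2}$.

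First I would introduce $u_1 := u^{f_1}$, the forward solution of \eqref{wave equation} associated with $f_1$, and construct an adjoint state $w_2$ by time-reversal. Explicitly, let $v_2$ be the forward solution of \eqref{wave equation} with source $Rf_2$ (which still belongs to $L^2((0,T)\times\omega)$) and zero initial data, and set $w_2(t,x) := v_2(T-t,x)$. A direct chain-rule calculation shows that $w_2$ solves $P w_2 = f_2$ on $(0,T)\times \R^n$ with vanishing \emph{final} data $w_2(T) = \partial_t w_2(T) = 0$.

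Next I would multiply $P u_1 = f_1$ by $w_2$ and integrate over $(0,T)\times \R^n$. The spatial integrations by parts produce no boundary contribution thanks to finite speed of propagation (Theorem~\ref{finite speed}), which guarantees that $u_1(t)$ and $w_2(t)$ are compactly supported for every $t$. The two integrations by parts in time produce boundary terms at $t=0$ killed by the initial data of $u_1$ and at $t=T$ killed by the final data of $w_2$. This leaves the clean identity
$$\int_0^T\!\!\int_{\R^n} f_1\, w_2 \, dx\, dt \;=\; \int_0^T\!\!\int_{\R^n} u_1\, f_2 \, dx\, dt.$$

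Finally, I would identify both sides using $\supp(f_j)\subset (0,T)\times\omega$. The right-hand side equals $(\Lambda_q f_1, f_2)_{L^2((0,T)\times\omega)}$ by definition of $\Lambda_q$. On the left-hand side, the restriction of $w_2$ to $(0,T)\times\omega$ satisfies $w_2(t,\cdot)|_\omega = v_2(T-t,\cdot)|_\omega = R\bigl(\Lambda_q(Rf_2)\bigr)(t,\cdot)$, so the left-hand side equals $(f_1, R\Lambda_q R f_2)_{L^2((0,T)\times\omega)}$. Since $f_1, f_2$ are arbitrary this proves $\Lambda_q^* = R\Lambda_q R$. The only mild technical point is justifying the integration by parts at the bare $L^2$ regularity of the sources; this can be handled by first taking $f_1, f_2 \in C^\infty_0((0,T)\times\omega)$, where $u_1$ and $w_2$ are classical, and then passing to the limit using the continuity of $\Lambda_q$ recalled at the start of the subsection.
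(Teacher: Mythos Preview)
Your argument is correct and follows essentially the same route as the paper: introduce the backward solution with vanishing final data (which you build explicitly as the time-reversal of a forward solution, while the paper simply posits it), integrate by parts using the zero initial/final conditions, and then identify the restriction of the backward solution to $(0,T)\times\omega$ with $R\Lambda_q R$ applied to the test source. Your extra remarks on finite speed of propagation for the spatial integration and on the density argument for $L^2$ sources are sensible refinements that the paper leaves implicit.
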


\begin{proof}
Let $f,h \in L^2((0,T)\times \omega)$ and consider $v$ the solution of 
$$
 \begin{cases}
    (\p^2_t-\Delta+q)v=h & \textnormal{in} \:(0,T) \times \R^n,\\
    v_{|t=T}=\p_tv_{|t=T}=0  & \textnormal{in} \: \R^n.
    \end{cases}
$$
Using the support properties of $f$ and $h$, the initial conditions on $u$ and $v$ and integrating by parts in time and space gives:
\begin{align*}
    \left(\Lambda_q f, h\right)_{L^2((0,T)\times \omega)}&=\int_{(0,T)\times \R^n} (\Lambda_q f) \Bar{h}dtdx=\int_{(0,T)\times \R^n} u  (\p^2_t-\Delta+q)\Bar{v} dtdx \\
    &=\int_{(0,T)\times \R^n} (\p^2_t-\Delta+q) u  \Bar{v} dtdx=\int_{(0,T)\times \R^n} f \Bar{v} dtdx \\
    &= \left(f, v_{|(0,T)\times \omega)}\right)_{L^2((0,T)\times \omega)}.
\end{align*}
The result follows by noticing that $v_{|(0,T)\times \omega)}=(R\Lambda_{q}R) (h).$
\end{proof}

The following Blagoveščenskii type identity is a crucial ingredient of the Boundary Control method and goes back to~\cite{blagoveshchenskii1967inverse}. We use a formulation similar to~\cite{Bingham2008IterativeTC}.

\begin{lem}
\label{the blagovetc identity}
    Let $T>0$, $f,h \in L^2((0,T)\times \omega)$. Then
$$
\left(u^f(T/2),u^h(T/2) \right)_{L^2(\R^n)}= \left(f, K(\Lambda)h\right)_{L^2((0,T/2)\times \omega)},
$$
where the operator $K=K(\Lambda)$ is given by
\begin{equation}
    \label{expression of K}
    K=R\Lambda R J+J\Lambda,
\end{equation}
with $J$ given by
$$
Jf(s):=\frac{1}{2}\int_{s}^{T-s}f(s^\prime)ds^\prime, \quad f\in L^2(0,T).
$$
\end{lem}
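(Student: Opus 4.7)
The plan is to introduce the auxiliary quantity $w(s,t) := (u^f(s), u^h(t))_{L^2(\R^n)}$. Noting that $w(T/2, T/2)$ is exactly the left-hand side, I would show that $w$ satisfies a one-dimensional wave equation in $(s,t)$ driven by a source that, thanks to the support hypothesis $\supp f, \supp h \subset (0,T)\times\omega$, depends only on the data $f, h$ and the source-to-solution map $\Lambda$. The identity then follows from the one-dimensional d'Alembert formula together with a careful rewriting of the result in the form $(f, K h)$.

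I would first compute $(\partial_s^2 - \partial_t^2)w$ by differentiating under the integral and substituting $\partial_s^2 u^f = (\Delta - q)u^f + f$ and the analogous equation for $u^h$. Since $q$ is real, the operator $\Delta - q$ is formally self-adjoint on $L^2(\R^n)$, so two integrations by parts cancel the Laplacian and potential contributions, leaving
$$
(\partial_s^2 - \partial_t^2) w(s,t) = \bigl(f(s), u^h(t)\bigr)_{L^2(\R^n)} - \bigl(u^f(s), h(t)\bigr)_{L^2(\R^n)} =: F(s,t).
$$
The vanishing of $u^f$ and $\partial_s u^f$ at $s = 0$ gives $w(0, t) = \partial_s w(0, t) = 0$, so d'Alembert's formula for the backward characteristic triangle from $(T/2, T/2)$ yields
$$
w(T/2, T/2) = \frac{1}{2}\int_0^{T/2}\!\!\int_\sigma^{T - \sigma} F(\sigma, \tau)\, d\tau\, d\sigma.
$$

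Next, the supports of $f$ and $h$ let me replace the $L^2(\R^n)$ inner products in $F$ by $L^2(\omega)$ inner products via $(f(\sigma), u^h(\tau))_{\R^n} = (f(\sigma), \Lambda h(\tau))_\omega$ and $(u^f(\sigma), h(\tau))_{\R^n} = (\Lambda f(\sigma), h(\tau))_\omega$. Performing the inner $\tau$-integration and recognising $\frac{1}{2}\int_\sigma^{T-\sigma} (\cdot)(\tau)\, d\tau = J(\cdot)(\sigma)$, the double integral collapses to
$$
w(T/2, T/2) = (f, J\Lambda h)_{L^2((0, T/2) \times \omega)} - (\Lambda f, Jh)_{L^2((0, T/2) \times \omega)}.
$$
The first term already matches the $J\Lambda$ contribution of $K$. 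For the second, I would invoke Lemma~\ref{adjoint of lambda} to replace $\Lambda^*$ by $R\Lambda R$, and then use the substitution $s \mapsto T - s$ together with causality (a source supported in $(T/2, T)$ produces a wave vanishing on $(0, T/2)$, so extending $Jh$ by zero to $(0,T)$ introduces no boundary contribution) to pass the time reversal through the adjoint and obtain $(f, R\Lambda R J h)_{L^2((0, T/2) \times \omega)}$, completing the identification.

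The main obstacle I expect is this final bookkeeping step: the operator $J$ naturally outputs functions on $(0, T/2)$ while $R$ acts on $(0, T)$, and one must verify that the adjoint identity from Lemma~\ref{adjoint of lambda}, combined with the causal vanishing above and the change of variables $s \mapsto T - s$, produces precisely the form $(f, R\Lambda R J h)$ appearing in the statement. The earlier steps (differentiation under the integral, integration by parts for $\Delta - q$, and d'Alembert) are routine given the $H^1$ well-posedness of the wave equation with $L^2$ sources.
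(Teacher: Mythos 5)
Your proof follows the paper's argument exactly: the same auxiliary function $W(t,s)=\left(u^f(t),u^h(s)\right)_{L^2(\R^n)}$, the same computation showing $(\p_t^2-\p_s^2)W=F$ with $F$ expressed through $\Lambda$ via the support of $f,h$, the same Duhamel/d'Alembert evaluation at $(T/2,T/2)$, and the same appeal to Lemma~\ref{adjoint of lambda} to identify the operator $K$. The final bookkeeping you flag as the main obstacle --- converting $\left(\Lambda f,Jh\right)_{L^2((0,T/2)\times\omega)}$ into $\left(f,R\Lambda R Jh\right)_{L^2((0,T/2)\times\omega)}$, which requires cutting $Jh$ off to $(0,T/2)$ before applying $\Lambda^*=R\Lambda R$ and tracking the resulting sign --- is treated no more explicitly in the paper, whose own final display $\left(f,(-J\Lambda+\Lambda^* J)h\right)$ in fact differs by signs from the stated $K=R\Lambda R J+J\Lambda$; your intermediate identity $\left(f,J\Lambda h\right)-\left(\Lambda f,Jh\right)$ is the substantive content and agrees with the paper's (indeed with the correct sign in front of the Duhamel integral).
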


\begin{proof}
    We define $W(t,s):=\left( u^f(t),u^h(s) \right)_{L^2(\R^n)}$ and calculate
    \begin{align*}
        (\p^2_t-\p^2_s)W(t,s)&= \left( \p^2_t u^f(t),u^h(s) \right)_{L^2(\R^n)}-\left( u^f(t), \p^2_s u^h(s) \right)_{L^2(\R^n)} \\
        &=\left( \Delta u^f(t)+f(t)-qu^f(t),u^h(s) \right)_{L^2(\R^n)}-\left( u^f(t), \Delta u^h(s) +h(s)-q u^h(s)\right)_{L^2(\R^n)}.
    \end{align*}
Integrating by parts, using the support of $f$ and recalling the definition of $\Lambda$ the first term becomes:
\begin{align*}
    \left( \Delta u^f(t)+f(t)-qu^f(t),u^h(s) \right)_{L^2(\R^n)}&=-\left(\nabla u^f(t),\nabla u^h(s)\right)_{L^2(\R^n)}+\left(f(t), (\Lambda h)(s) \right)_{L^2( \omega)} \\
    & \hspace{4mm}-\left(q u^f(t),u^h(s) \right)_{L^2(\R^n)}.
    \end{align*}
Similarly, we find for the second term (we use here that $q$ is real valued):
\begin{align*}
    \left( u^f(t), \Delta u^h(s) +h(s)-q u^h(s)\right)_{L^2(\R^n)}&=-\left(\nabla u^f(t),\nabla u^h(s)\right)_{L^2(\R^n)}+\left( (\Lambda f)(t),  h(s) \right)_{L^2( \omega)}\\
    & \hspace{4mm}-\left(q u^f(t),u^h(s) \right)_{L^2(\R^n)}
\end{align*}
Putting all of the above together yields
\begin{align*}
 (\p^2_t-\p^2_s)W(t,s)&=F(t,s),
\end{align*}
with
\begin{equation}
    \label{def of F}
    F(t,s):=\left(f(t), (\Lambda h)(s) \right)_{L^2(\omega)}-\left( (\Lambda f)(t),  h(s) \right)_{L^2(\omega)}.
\end{equation}
We moreover have 
$$
W_{|t=0}=\p_t {W}_{|t=0}=W_{|s=0}=\p_s {W}_{|s=0}=0.
$$
Then $W$ solves the following $1D$ wave equation:
$$
 \begin{cases}
    (\p^2_t-\p^2_s)W(t,s)=F(t,s) & \textnormal{in} \:(0,T) \times (0,T) ,\\
    W_{|s=0}(t)=\p_s {W}_{|s=0}=0  & \textnormal{in} \: (0,T), \\
    W_{|t=0}(s)=\p_t {W}_{|t=0}=0 & \textnormal{in} \: (0,T).
    \end{cases}
$$
We can solve the above system explicitly and obtain the expression
\begin{equation}
\label{expression for W}
    W(t,s)=\frac{1}{2}\int_0^t \int_{s-r}^{s+r} F(t-r,y)dydr, \quad (t,s)\in (0,T/2) \times (0,T/2).
\end{equation}
This gives
\begin{align*}
    W(T/2,T/2)&=-\frac{1}{2}\int_{0}^{T/2}\int_{s}^{T-s}F(s,y)dy ds=-\int_0^{T/2}(JF(s,\cdot))(s) ds \\
    &=-\left(f, J\Lambda h  \right)_{L^2((0,T/2)\times \omega)}+\left(\Lambda f, J h  \right)_{L^2((0,T/2)\times \omega)} \\
    &=\left(f, (-J\Lambda+\Lambda^* J) h  \right)_{L^2((0,T/2)\times \omega)},
\end{align*}
and the result follows by recalling the expression of $\Lambda^*$ in Lemma~\ref{adjoint of lambda}.
\end{proof}

Lemma~\ref{the blagovetc identity} allows to obtain a linear stability estimate for the difference of inner products associated to two solutions $u_1,u_2$:
\begin{prop}
\label{prop stable recovery of inner products}
    Let $0<t, \tp<T/2$, $f,h \in L^2((0,T)\times \omega)$ and consider the solutions $u_1$ and $u_2$ associated to the potentials $q_1,q_2$. Then we have the estimate
    \begin{equation}
        \label{stability estimate for inner products}
        \left| \left(u_1^f(t),u_1^h(\tp) \right)_{L^2(\R^n)}- \left(u_2^f(t),u_2^h(\tp) \right)_{L^2(\R^n)}  \right| \leq C \norm{f}{L^2((0,T)\times \omega)}{}\norm{h}{L^2((0,T)\times \omega)}{} \norm{\Lambda_1-\Lambda_2}{L^2\to L^2}{},
    \end{equation}
with a constant $C>0$ depending only on $T$.
\end{prop}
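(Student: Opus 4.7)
The plan is to upgrade the Blagoveščenskii identity of Lemma~\ref{the blagovetc identity} from its specialization at $(T/2, T/2)$ to general times $(t, \tp) \in (0, T/2)^2$, and then to track the linear dependence on $\Lambda$ in the resulting representation formula.

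For each $j \in \{1,2\}$, set $W_j(t, \tp) := (u_j^f(t), u_j^h(\tp))_{L^2(\R^n)}$. The computation carried out in the proof of Lemma~\ref{the blagovetc identity} applies verbatim to each $W_j$: it satisfies the one-dimensional wave equation $(\p_t^2 - \p_{\tp}^2) W_j = F_j$ on $(0,T)\times(0,T)$ with zero Cauchy data, where $F_j$ is given by \eqref{def of F} with $\Lambda$ replaced by $\Lambda_j$. The d'Alembert representation \eqref{expression for W} therefore gives, for $(t, \tp) \in (0, T/2)^2$,
$$W_j(t, \tp) = \frac{1}{2}\int_0^t \int_{\tp - r}^{\tp + r} F_j(t - r, y) \, dy \, dr.$$
Subtracting the expressions for $j=1,2$ and using \eqref{def of F}, the integrand becomes
$$F_1(t,s) - F_2(t,s) = \bigl(f(t), ((\Lambda_1 - \Lambda_2) h)(s)\bigr)_{L^2(\omega)} - \bigl(((\Lambda_1 - \Lambda_2) f)(t), h(s)\bigr)_{L^2(\omega)},$$
so the difference $W_1(t,\tp)-W_2(t,\tp)$ depends linearly on $\Lambda_1-\Lambda_2$.

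To conclude, I would pass absolute values inside the integral and estimate each of the two terms of $F_1 - F_2$ by Cauchy-Schwarz in the $\omega$ variable; the resulting scalar integrand factorizes, and a further application of Cauchy-Schwarz in time over the finite integration region (bounded in each direction by $T$, using that $f, h, \Lambda_j f, \Lambda_j h$ are supported in $(0,T)$) yields the claim \eqref{stability estimate for inner products} with a constant $C$ depending only on $T$. No serious obstacle is expected: the only mild point is that the range $[\tp - r, \tp + r]$ may exit $(0,T)$, which is handled by extending $F_j$ by zero outside $(0,T)^2$, consistently with the supports of $f, h$ and of the corresponding source-to-solution images.
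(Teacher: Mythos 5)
Your proof is correct, and it takes a somewhat different route from the paper's. Both arguments rest on the same core computation from Lemma~\ref{the blagovetc identity} (the $1$D wave equation $(\p_t^2-\p_s^2)W_j=F_j$ with vanishing Cauchy data and its d'Alembert solution), but you diverge at the point where general times $(t,\tp)$ must be handled: the paper translates the sources in time, writing $u_j^f(t)=v_j^{\tau_s f}(T/2)$ with $s=T/2-t$ (and similarly for $h$), so as to reduce to the diagonal value $W(T/2,T/2)=(f,K(\Lambda)h)$ and then exploit the explicit operator $K(\Lambda)=R\Lambda RJ+J\Lambda$, whose linearity in $\Lambda$ and $L^2$-boundedness (via $\norm{R}{L^2\to L^2}{}=1$ and the Cauchy--Schwarz bound on $J$) give the estimate. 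You instead evaluate the representation formula~\eqref{expression for W} directly at $(t,\tp)$, subtract the two versions, and estimate the resulting integral of $F_1-F_2$ by Cauchy--Schwarz; this avoids the translation step (and the implicit extension of the Blagoveščenskii computation to sources shifted by two different amounts $s\neq s'$), at the price of handling the integral by hand rather than through the operator $K$. Your remark about the backward light cone possibly dipping below $\{y=0\}$ is handled correctly: $h$, $\Lambda_j h$ and hence $F_j(\cdot,y)$ genuinely vanish for $y\le 0$, and since $t+\tp<T$ the region never reaches $y>T$, so the zero extension is consistent exactly where it is needed. Both proofs yield the same conclusion with $C=C(T)$.
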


\begin{proof}
Let us consider $s:=T/2-t>0$ and $s^\prime:=T/2-t^\prime>0$. We denote by $\tau_s$ the translation by $s$ given by $\tau_s(g)(t):=g(t-s)$ and observe that $u_j^f(t)=v^{\tau_s f}_j(T/2)$ with $v_j:=v^{\tau_s f}_j$ satisfying 
$$
\begin{cases}
    (\p^2_t-\Delta+q)v_j=\tau_s f & \textnormal{in} \:(s,T+s) \times \R^n,\\
    v_{j|t=s}=\p_tv_{j|t=s}=0  & \textnormal{in} \: \R^n.
\end{cases}
$$
We define similarly $u_j^h(t^\prime)=v^{\tau_{s^{\prime}} h}_j(T/2)$ and use the calculations of Lemma~\ref{the blagovetc identity} to find:
    \begin{align*}
    &\left| \left(u_1^f(t),u_1^h(\tp) \right)_{L^2(\R^n)}- \left(u_2^f(t),u_2^h(\tp) \right)_{L^2(\R^n)}  \right|\\
    &=\left| \left(v^{\tau_s f}_1(T/2),v^{\tau_{s^{\prime}} h}_1(T/2) \right)_{L^2(\R^n)}- \left(v^{\tau_s f}_2(T/2),v^{\tau_{s^{\prime}} h}_2(T/2) \right)_{L^2(\R^n)}  \right|\\
    &= \left| \left(\tau_s f, K(\Lambda_1-\Lambda_2)\tau_{s^{\prime}}h\right)_{L^2((s,T/2+s)\times \omega)}  \right| \\
        &=\left| \left(f, K(\Lambda_1-\Lambda_2)h\right)_{L^2((0,T/2)\times \omega)}  \right|\leq \norm{f}{L^2}{} \norm{K(\Lambda_1-\Lambda_2)h}{L^2((0,T/2)\times \omega)}{}.
    \end{align*}
We observe now that $\norm{R}{L^2\to L^2}{}=1$ and by the Cauchy-Schwarz inequality one has $\norm{Jf}{L^2(0,T/2)}{}\leq C \norm{f}{L^2(0,T)}{}$. Recalling the expression of $K$ in~\eqref{expression of K} yields the estimate.
\end{proof}

\subsection{From inner products to domains of influence}
The crucial step now is to introduce characteristic functions of domains of influence in the estimate~\eqref{stability estimate for inner products}. This is achieved by using approximate controllability. This is the least stable step of the whole proof.

Let us consider $0<s<t<T/2$, $f\in L^2((0,T)\times \omega)$ and $ g \in L^2((t-s,t)\times \omega) $. We calculate:
\begin{align*}
    \norm{u^g(t)-u^f(t)}{L^2}{2}&=\norm{u^g(t)-\mathds{1}_{M(\omega,s)}u^f(t)}{L^2}{2}+\norm{(1-\mathds{1}_{M(\omega,s)})u^f(t)}{L^2}{2}\\
    &\hspace{4mm}-2 \left(u^g(t)-\mathds{1}_{M(\omega,s)}u^f(t),(1-\mathds{1}_{M(\omega,s)})u^f(t)\right)_{L^2}.
\end{align*}
Notice that by finite speed of propagation one has $\supp(u^g(t)) \subset M(\omega,s)$ and consequently the inner product in the last term above vanishes. This gives the identity:
\begin{equation}
    \label{identity sum of squares}
     \norm{u^g(t)-u^f(t)}{L^2}{2}=\norm{u^g(t)-\mathds{1}_{M(\omega,s)}u^f(t)}{L^2}{2}+\norm{\left(1-\mathds{1}_{M(\omega,s)}\right )u^f(t)}{L^2}{2}.
\end{equation}
To alleviate notation we define the following quantities:
\begin{align}
\label{def of As}
    A_j(g)=A_j(g)(t)&= \norm{u_j^g(t)-u_j^f(t)}{L^2}{2}, \\
    \label{def of Bs}
    B_j(g)=B_j(g)(t,s)&= \norm{u_j^g(t)-\mathds{1}_{M(\omega,s)}u_j^f(t)}{L^2}{2}, \\
    \label{def of Gammas}
    \Gamma_j=\Gamma_j(t,s)&=\norm{\left(1-\mathds{1}_{M(\omega,s)}\right )u_j^f(t)}{L^2}{2},
\end{align}
where $u_j, j \in \{1,2\}$ are the solutions associated to potentials $q_1,q_2$ and $f\in L^2((0,T)\times \omega)$ is fixed and omitted from the notation for better readability. In particular, identity~\eqref{identity sum of squares} becomes with this notation
\begin{equation}
    \label{identity with A,B,Gamma}
     A_j(g)=B_j(g)+ \Gamma_j, \quad j \in \{1,2\}, \: g \in  L^2((t-s,t) \times \omega).
\end{equation}

Let us now illustrate how one exploits approximate controllability in the qualitative case. Suppose that we have $\Lambda_1=\Lambda_2$. Then Proposition~\ref{prop stable recovery of inner products} implies $A_1(g)=A_2(g)$ and~\eqref{identity with A,B,Gamma} gives
\begin{equation}
    \label{identity with B,Gamma}
    B_1(g)+ \Gamma_1=B_2(g)+ \Gamma_2.
\end{equation}
Since $B_j(g)+\Gamma_j\geq \Gamma_j$ the approximate controllability result of Theorem~\ref{cor approx control} yields 
$$ \displaystyle  \inf_{g \in L^2((t-s,t) \times \omega)} \left(B_j(g)+\Gamma_j \right) =\Gamma_j.
$$ 
Taking the infimum over $g$ on both sides of~\eqref{identity with B,Gamma} we find that $\Gamma_1=\Gamma_2$ which in turn implies once again thanks to~\eqref{identity with B,Gamma} that $B_1(g)=B_2(g)$. This equality essentially allows to introduce the quantity $\mathds{1}_{M(\omega,s)}u_j^f(t)$ in the inner products in~\eqref{stability estimate for inner products}. For the stability problem we need to provide an estimate for the difference $\left| B_1(g)-B_2(g) \right|.$ 

An important remark is that the infimum of the quantity $B_j+\Gamma_j$ is, in general, not attained. Indeed, that would imply the existence of an \textit{exact} control $g$ and it is known since the seminal work of ~\cite{BLR:92} that this is essentially equivalent to the Geometric Control Condition for $\omega$ (see as well~\cite{BG:97}). In the geometric context under consideration one would need an exact control from $\omega \subset M(\omega,s)$ in time $s$. The Geometric Control Condition in this case asks that line segment $\Gamma$ of length $s$ intersecting $M(\omega,s)$ satisfies $\omega \cap \Gamma \neq \emptyset.$ This is an extremely restrictive condition on $\omega$.

We consider then a small regularization parameter $\alpha>0$ and study a minimization problem for $A(g)+\alpha \norm{g}{L^2}{2}=\norm{u^g(t)-u^f(t)}{L^2}{2}+\alpha \norm{g}{L^2}{2}$ (we drop the index $j$ to alleviate notation). We have the following lemma:

\begin{lem}
    \label{minimazation prblem properties}
    Let $\alpha \in (0,1)$ and define $$A_\alpha(g):= A(g)+\alpha \norm{g}{L^2((t-s,t) \times \omega)}{2}=\norm{u^g(t)-u^f(t)}{L^2}{2}+\alpha \norm{g}{L^2((t-s,t) \times \omega)}{2}.$$ Then the minimization problem
    $$
   \min_{g \in L^2((t-s,t) \times \omega)} A_\alpha(g),
    $$
    admits a unique solution $g=g_\alpha$. The minimizer $g_\alpha$ is the unique solution of the equation
    $$
    (K(\Lambda)+\alpha)g_\alpha=K(\Lambda)f,
    $$
and satisfies the estimate
\begin{equation}
    \label{estim for galpha}
    \norm{g_\alpha}{L^2((t-s)\times \omega)}{}\leq \frac{C}{\alpha} \norm{f}{L^2((0,T)\times \omega)}{},
\end{equation}
for a constant $C$ depending only on $T$.
\end{lem}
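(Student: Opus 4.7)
The plan is to view $A_\alpha$ as a Tikhonov-regularized least-squares problem on the Hilbert space $H := L^2((t-s,t)\times\omega)$. By linearity of the wave equation~\eqref{wave equation}, the map $g \mapsto u^g(t)$ is affine, so $A(g) = \|u^g(t)-u^f(t)\|^2_{L^2(\R^n)}$ is a continuous, non-negative (hence convex) quadratic functional. Adding the strictly convex, coercive term $\alpha\|g\|_H^2$ yields a continuous, strictly convex, coercive functional on $H$; the direct method of the calculus of variations (or completion of squares) then produces a unique minimizer $g_\alpha\in H$.

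To derive the equation satisfied by $g_\alpha$, I would compute the first variation. For any $\phi\in H$, linearity of the wave equation gives $u^{g_\alpha+\epsilon\phi}(t)=u^{g_\alpha}(t)+\epsilon u^\phi(t)$, hence
\begin{equation*}
\left.\tfrac{d}{d\epsilon}\right|_{\epsilon=0}A_\alpha(g_\alpha+\epsilon\phi)
=2\,\mathrm{Re}\bigl\langle u^{g_\alpha}(t)-u^f(t),u^\phi(t)\bigr\rangle_{L^2(\R^n)}+2\alpha\,\mathrm{Re}\langle g_\alpha,\phi\rangle_H.
\end{equation*}
Setting this to $0$ for every $\phi$, and applying the Blagoveščenskii identity of Lemma~\ref{the blagovetc identity} (in the translated form already used in the proof of Proposition~\ref{prop stable recovery of inner products}) to rewrite $\langle u^a(t),u^b(t)\rangle_{L^2(\R^n)}$ as $\langle K(\Lambda)a,b\rangle_H$, one obtains the claimed identity $(K(\Lambda)+\alpha)g_\alpha=K(\Lambda)f$. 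Uniqueness in this equation follows either from Step~1 or from the invertibility of $K(\Lambda)+\alpha$ established below.

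For the norm estimate, the Blagoveščenskii identity itself shows that $\langle K(\Lambda)g,g\rangle_H=\|u^g(t)\|^2_{L^2(\R^n)}\geq 0$, so $K(\Lambda)$ is self-adjoint and positive semi-definite on $H$. Consequently $K(\Lambda)+\alpha\geq\alpha I$, giving the spectral bound $\|(K(\Lambda)+\alpha)^{-1}\|_{H\to H}\leq 1/\alpha$. Boundedness of $K(\Lambda)$ with constant depending only on $T$, already used in the proof of Proposition~\ref{prop stable recovery of inner products} (via $\|R\|=1$ and the Cauchy--Schwarz estimate $\|J\|_{L^2(0,T)\to L^2(0,T/2)}\leq C$), then yields
\begin{equation*}
\|g_\alpha\|_H=\|(K(\Lambda)+\alpha)^{-1}K(\Lambda)f\|_H\leq\frac{1}{\alpha}\|K(\Lambda)f\|_H\leq\frac{C}{\alpha}\|f\|_{L^2((0,T)\times\omega)}.
\end{equation*}

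The only real subtlety is the bookkeeping around time translation in the Blagoveščenskii identity, so that the operator appearing in the Euler--Lagrange equation is precisely the $K(\Lambda)$ defined in~\eqref{expression of K} (with its $T/2$-endpoint) rather than a shifted variant. This is handled by the same translation trick already carried out in the proof of Proposition~\ref{prop stable recovery of inner products}, and is not a serious obstacle; everything else reduces to standard Hilbert-space functional analysis.
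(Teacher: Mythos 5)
Your proposal is correct and follows essentially the same route as the paper, which itself defers the existence/uniqueness and Euler--Lagrange characterization to \cite[Lemma 2]{Bingham2008IterativeTC} and obtains the bound~\eqref{estim for galpha} exactly as you do, from the positivity and self-adjointness of $K(\Lambda)$ (via the Blagove\v{s}\v{c}enski\u{\i} identity) together with the resolvent bound $\norm{(K(\Lambda)+\alpha)^{-1}}{L^2\to L^2}{}\leq 1/\alpha$ and the boundedness of $K(\Lambda)$ with a constant depending only on $T$. Your write-up simply makes explicit the standard Tikhonov argument that the paper cites.
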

\begin{proof}
The proof is the same as in~\cite[Lemma 2]{Bingham2008IterativeTC} and we omit it. Estimate~\eqref{estim for galpha} comes from the fact that $K(\Lambda)$ is positive and self-adjoint (which is a consequence of the relation of Lemma~\ref{the blagovetc identity}). This implies
$$
\norm{(K(\Lambda)+\alpha)^{-1}}{L^2 \to L^2}{}= \frac{1}{\textnormal{dist}(\sigma(K(\Lambda)),-\alpha)}\leq \frac{1}{\alpha},
$$
and hence~\eqref{estim for galpha}.
\end{proof}

In the qualitative argument illustrated before, we used the approximate controllability result of Theorem~\ref{cor approx control} which is based on unique continuation from the set $(-s,s)\times \omega$ to $M(\omega,s)$. This is the \textit{optimal} domain where unique continuation holds and one needs to quantify this property.

We then have the following key proposition:
\begin{prop}
    \label{prop difference of Bs}
For all $\varepsilon>0$ there exist $g_{j}=g_{j}(\varepsilon) \in L^2((t-s,t)\times \omega),$ $j \in \{1,2\}$ such that for all $\alpha \in (0,1)$ one has the estimate
\begin{align}
    \label{estim difference of Bs}
    &\left| B_1(g)-B_2(g) \right|  \nonumber \\
    &\leq C \bigg(\varepsilon^2 \norm{f}{L^2}{2}+\alpha( \norm{g_{1}}{L^2((t-s,t) \times \omega)}{2}+\norm{g_{2}}{L^2((t-s,t) \times \omega)}{2})  \nonumber \\
    &\hspace{4mm} +\norm{\Lambda_1-\Lambda_2}{L^2 \to L^2}{}\left(\norm{g_{1,\alpha}}{L^2}{2}+\norm{g_{2,\alpha}}{L^2}{2}+\norm{g}{L^2}{2}+\norm{f}{L^2}{2}\right) \bigg).
\end{align}
Moreover, we have the bound
\begin{equation}
    \label{bounds for control cost}
\norm{g_{j}}{L^2}{}\leq \ct(\ve) \norm{f}{L^2}{},
\end{equation}
with $C$ depending on $T,M,K,\omega,n$ and $\ct(\ve
)$ the cost of approximate control constant of Theorem~\ref{cor approx control}.
\end{prop}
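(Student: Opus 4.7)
The plan is to isolate the unobservable quantities $\Gamma_j$ via the identity $A_j(g) = B_j(g) + \Gamma_j$ of~\eqref{identity with A,B,Gamma}, which rewrites the target difference as
$$
B_1(g) - B_2(g) = [A_1(g) - A_2(g)] - [\Gamma_1 - \Gamma_2].
$$
For the first bracket, expand each $A_j(g) = \norm{u_j^g(t)-u_j^f(t)}{L^2}{2}$ into three $L^2(\R^n)$ inner products and apply Proposition~\ref{prop stable recovery of inner products} to each (the time shifts built into that proposition automatically take care of the fact that $g$ is supported in $(t-s,t)$ rather than an interval starting at $0$). This yields $|A_1(g) - A_2(g)| \leq C \norm{\Lambda_1-\Lambda_2}{L^2\to L^2}{}(\norm{g}{L^2}{2} + \norm{f}{L^2}{2})$, which is exactly the $\norm{g}{L^2}{2} + \norm{f}{L^2}{2}$ contribution appearing in the $\norm{\Lambda_1-\Lambda_2}{L^2\to L^2}{}$-term of the statement. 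The bulk of the work is therefore the bound on $|\Gamma_1 - \Gamma_2|$.

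For the $\Gamma_j$ estimate, apply Theorem~\ref{cor approx control} on $\M = M(\omega,s)$ in time $s$ to a target $v_j$ approximating $\mathds{1}_{M(\omega,s)} u_j^f(t)$, producing approximate controls $g_j = g_j(\ve)$ with $B_j(g_j) \leq \ve^2 \norm{v_j}{H^1(\M)}{2}$ and $\norm{g_j}{L^2}{} \leq \ct(\ve)\norm{v_j}{H^1(\M)}{}$; the hyperbolic regularity estimate $\norm{u_j^f(t)}{H^1}{} \lesssim \norm{f}{L^2}{}$ then gives $\norm{v_j}{H^1}{} \lesssim \norm{f}{L^2}{}$ and matches~\eqref{bounds for control cost}. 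Plugging $g_j$ as a competitor for the minimizer $g_{j,\alpha}$ of Lemma~\ref{minimazation prblem properties} and subtracting $\Gamma_j$ from both sides yields
$$
B_j(g_{j,\alpha}) + \alpha\norm{g_{j,\alpha}}{L^2}{2} \leq B_j(g_j) + \alpha\norm{g_j}{L^2}{2} \lesssim \ve^2 \norm{f}{L^2}{2} + \alpha \norm{g_j}{L^2}{2}.
$$
A symmetric cross-comparison --- testing $g_{2,\alpha}$ against $g_{1,\alpha}$ in the $q_1$-minimization and vice versa, and controlling the resulting cross-errors $|A_1(g_{i,\alpha}) - A_2(g_{i,\alpha})|$ via Proposition~\ref{prop stable recovery of inner products} --- shows that the two optimal values $\Gamma_j + B_j(g_{j,\alpha}) + \alpha\norm{g_{j,\alpha}}{L^2}{2}$ differ by at most $C\norm{\Lambda_1-\Lambda_2}{L^2\to L^2}{}(\norm{g_{1,\alpha}}{L^2}{2} + \norm{g_{2,\alpha}}{L^2}{2} + \norm{f}{L^2}{2})$. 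Combining this with the smallness of the $B_j(g_{j,\alpha}) + \alpha\norm{g_{j,\alpha}}{L^2}{2}$ terms delivers the bound on $|\Gamma_1 - \Gamma_2|$, and adding the first bracket gives~\eqref{estim difference of Bs}.

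The main obstacle is the $H^1$-regularity that Theorem~\ref{cor approx control} requires of its target: the natural candidate $\mathds{1}_{M(\omega,s)} u_j^f(t)$ sits only in $L^2$ due to the sharp cutoff. This is remedied by replacing the indicator by a smooth cutoff $\tchi$ adapted to a slight enlargement of $M(\omega,s)$, chosen so that $\tchi u_j^f(t) \in H^1$ with $H^1$-norm still $\lesssim \norm{f}{L^2}{}$, while the $L^2$-discrepancy between $\tchi u_j^f(t)$ and $\mathds{1}_{M(\omega,s)} u_j^f(t)$ is absorbed into the $\ve^2\norm{f}{L^2}{2}$ term by tuning the cutoff scale against $\ve$ (or via a density argument). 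The remaining bookkeeping is making sure the three error parameters $\ve$, $\alpha$ and $\norm{\Lambda_1-\Lambda_2}{L^2\to L^2}{}$ enter additively, so that no harmful $\ve/\alpha$ quotient appears in the final estimate.
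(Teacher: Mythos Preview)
Your proposal is correct and follows essentially the same route as the paper: split $B_1(g)-B_2(g)$ into $[A_1(g)-A_2(g)]-[\Gamma_1-\Gamma_2]$, bound the first bracket by Proposition~\ref{prop stable recovery of inner products}, and bound $|\Gamma_1-\Gamma_2|$ by sandwiching each $\Gamma_j$ between $A_{j,\alpha}(g_{j,\alpha})$ and the value at the approximate control $g_j$, then comparing the two regularized minima via the same proposition.

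One clarification: your ``main obstacle'' paragraph is unnecessary. Theorem~\ref{cor approx control} is applied with target $v = u_j^f(t)\big|_{M(\omega,s)} \in H^1(M(\omega,s))$, not with the indicator-truncated function on $\R^n$; since $u_j^{g_j}(t)$ is supported in $M(\omega,s)$ by finite speed of propagation, one has $\norm{u_j^{g_j}(t)-\mathds{1}_{M(\omega,s)}u_j^f(t)}{L^2(\R^n)}{} = \norm{u_j^{g_j}(t)-u_j^f(t)}{L^2(M(\omega,s))}{}$, so no smoothing of the cutoff is needed and $\norm{v}{H^1(M(\omega,s))}{} \leq \norm{u_j^f(t)}{H^1(\R^n)}{} \lesssim \norm{f}{L^2}{}$ directly.
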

We recall that $g_{j,\alpha}$ denote the minimizers of the quantities $A_{j,\alpha}$.

\begin{proof}
Recalling the definition of $A_j$ in~\eqref{def of As} and using Lemma~\ref{prop stable recovery of inner products} yields
\begin{equation}
    \label{estim difference ofAs}
    \left| A_1(g)-A_2(g) \right| \leq C \norm{\Lambda_1-\Lambda_2}{L^2 \to L^2}{}\left( \norm{f}{L^2}{2}+ \norm{g}{L^2}{2} \right).
\end{equation}
We now want to show that the minima of $A_{j,\alpha}$ are close to each other when the source to solution maps $\Lambda_j$ are close to each other. We have:
\begin{align*}
 A_{2,\alpha}(g)= A_{1,\alpha}(g)+A_2(g)-A_1(g)\geq A_1(g_{1,\alpha})- C \norm{\Lambda_1-\Lambda_2}{L^2 \to L^2}{}\left( \norm{f}{L^2}{2}+ \norm{g}{L^2}{2} \right),
\end{align*}
thanks to~\eqref{estim difference ofAs}. Choosing $g=g_{2,\alpha}$ we get
$$
A_{2,\alpha}(g_{2,\alpha}) \geq A_1(g_{1,\alpha})- C \norm{\Lambda_1-\Lambda_2}{L^2 \to L^2}{}\left( \norm{f}{L^2}{2}+ \norm{g_{2,\alpha}}{L^2}{2} \right),
$$
and we similarly find
$$
A_{1,\alpha}(g_{1,\alpha}) \geq A_2(g_{2,\alpha})- C \norm{\Lambda_1-\Lambda_2}{L^2 \to L^2}{}\left( \norm{f}{L^2}{2}+ \norm{g_{1,\alpha}}{L^2}{2} \right).
$$
These two imply
\begin{equation}
    \label{estim difference of minima}
    \left|  A_{1,\alpha}(g_{1,\alpha})-A_{2,\alpha}(g_{2,\alpha}) \right| \leq C \norm{\Lambda_1-\Lambda_2}{L^2 \to L^2}{}\left( \norm{f}{L^2}{2}+ \norm{g_{1,\alpha}}{L^2}{2} +\norm{g_{2,\alpha}}{L^2}{2}\right).
\end{equation}

We recall the equality
\begin{align}
    A_j(g)=\norm{u_j^g(t)-\mathds{1}_{M(\omega,s)}u_j^f(t)}{L^2}{2}+\Gamma_j.
\end{align}
We can then apply Theorem~\ref{cor approx control} which gives the existence of $g_{j}( \varepsilon) \in C^\infty_0((t-s,t)\times \omega)$ such that 
\begin{equation}
\label{ineq 2}
\norm{u_j^{g_j}(t)-\mathds{1}_{M(\omega,s)}u_j^f(t)}{L^2}{2} \leq \frac{\ve^2}{2} \norm{u^f_j(t)}{H^1}{2},
\end{equation}
with $g_j$ satisfying the bound
$$
\norm{g_{j}}{L^2}{} \leq   \ct(\ve) \norm{f}{L^2}{}.
$$
This gives
\begin{align*}
A_j(g_j) \leq \Gamma_j+ \frac{\ve^2}{2} \norm{u^f_j(t)}{H^1}{2} \Longrightarrow A_{j,\alpha}(g_j)  \leq \Gamma_j+ \frac{\ve^2}{2} \norm{u^f_j(t)}{H^1}{2}+\alpha \norm{g_j}{L^2}{2}. 
\end{align*}

We get as a consequence
$$
A_{j,\alpha}(g_{j, \alpha}) \leq A_{j,\alpha}(g_j) \leq \Gamma_j+ \frac{\ve^2}{2} \norm{u^f_j(t)}{H^1}{2}+\alpha \norm{g_j}{L^2}{2},
$$
and since $A_{j,\alpha}(g_\alpha)\geq \Gamma_j$ we have
\begin{equation}
    \label{estim minima close to gammas}
    \left| A_{j,\alpha}(g_{j, \alpha}) -\Gamma_j \right| \leq \frac{\ve^2}{2} \norm{u^f_j(t)}{H^1}{2}+\alpha \norm{g_j}{L^2}{2}.
\end{equation}
The result follows by writing
\begin{align*}
    \left| B_1(g)-B_2(g) \right| &\leq \left| \Gamma_1-\Gamma_2 \right |+ \left| B_1(g)-B_2(g)+\Gamma_1-\Gamma_2\right| \\
    &=\left| \Gamma_1-\Gamma_2 \right |+\left| A_1(g)-A_2(g) \right| \\ 
    &\leq \left | \Gamma_1-A_{1,\alpha}(g_{1,\alpha}) \right| +\left | A_{1,\alpha}(g_{1,\alpha})-A_{2,\alpha}(g_{2,\alpha}) \right| \\&\hspace{4mm}+ \left | \Gamma_2-A_{2,\alpha}(g_{2,\alpha}) \right|+\left| A_1(g)-A_2(g) \right|,
\end{align*}
using~\eqref{estim difference ofAs},\eqref{estim difference of minima} , \eqref{estim minima close to gammas} as well as the regularity of the wave equation which gives $\norm{u^f_j(t)}{H^1}{}\leq C \norm{f}{L^2}{}$.
\end{proof}

\begin{lem}
\label{lem inner products with domains of infl}
Let $T>0$, $0<s<t<T/2$, $0<\tp <T/2$ and $f,h \in L^2((0,T)\times \omega)$. Suppose that $\norm{\Lambda_1-\Lambda_2}{L^2 \to L^2}{}\leq 1$. Then for all $\varepsilon>0$ there exist $g_{j}=g_{j}(\varepsilon) \in L^2((t-s,t)\times \omega),$ $j \in \{1,2\}$ such that for all $\alpha \in (0,1)$ one has the estimate
\begin{align*}
&\left| \left( \mathds{1}_{M(\omega,s)}u_1^f(t),u_{1}^h(\tp)\right)_{L^2} -\left( \mathds{1}_{M(\omega,s)}u_2^f(t),u_{2}^h(\tp)\right)_{L^2} \right| \nonumber \\
&\leq C \norm{h}{L^2}{} \bigg(  \varepsilon \norm{f}{L^2}{}+\sqrt{\alpha}(\norm{g_1}{L^2}{}+\norm{g_2}{L^2}{})+\norm{\Lambda_1-\Lambda_2}{L^2 \to L^2}{1/2}\left(\frac{1}{\alpha}\norm{f}{L^2}{}+\norm{g_{1}}{L^2}{}\right)   \nonumber\bigg),
\end{align*}
with $C$ depending on $T,M,K,\omega,n$ and $g_j$ satisfying the bound~\eqref{bounds for control cost}.
\end{lem}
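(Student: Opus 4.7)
The plan is to use $g_1=g_1(\ve)$, the approximate control produced in Proposition~\ref{prop difference of Bs}, as a proxy for the characteristic-function expression $\mathds{1}_{M(\omega,s)}u_1^f(t)$. Inequality~\eqref{ineq 2} in the proof of that proposition already gives $B_1(g_1)\leq C\ve^2\|f\|_{L^2}^2$ directly, while Proposition~\ref{prop difference of Bs} itself controls $|B_1(g)-B_2(g)|$ for an arbitrary $g\in L^2((t-s,t)\times\omega)$. I would decompose the target quantity by adding and subtracting $(u_j^{g_1}(t),u_j^h(\tp))_{L^2}$ for $j\in\{1,2\}$:
\begin{align*}
&(\mathds{1}_{M(\omega,s)} u_1^f(t), u_1^h(\tp))_{L^2} - (\mathds{1}_{M(\omega,s)} u_2^f(t), u_2^h(\tp))_{L^2} \\
&\qquad= (\mathds{1}_{M(\omega,s)} u_1^f(t) - u_1^{g_1}(t), u_1^h(\tp))_{L^2} - (\mathds{1}_{M(\omega,s)} u_2^f(t) - u_2^{g_1}(t), u_2^h(\tp))_{L^2} \\
&\qquad\quad + \bigl[(u_1^{g_1}(t), u_1^h(\tp))_{L^2} - (u_2^{g_1}(t), u_2^h(\tp))_{L^2}\bigr].
\end{align*}

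The bracket on the last line is exactly the quantity treated by Proposition~\ref{prop stable recovery of inner products}, applied with source $g_1$ in place of $f$, hence bounded by $C\|g_1\|_{L^2}\|h\|_{L^2}\|\Lambda_1-\Lambda_2\|_{L^2\to L^2}$. Cauchy--Schwarz combined with the standard energy estimate $\|u_j^h(\tp)\|_{L^2}\leq C\|h\|_{L^2}$ controls the first two terms by $C\sqrt{B_1(g_1)}\|h\|_{L^2}\leq C\ve\|f\|_{L^2}\|h\|_{L^2}$ and $C\sqrt{B_2(g_1)}\|h\|_{L^2}$ respectively.

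The only delicate point — and where the asymmetry between the two potentials bites — is the estimate for $\sqrt{B_2(g_1)}$, since $g_1$ is a control designed for $u_1$, not $u_2$. To produce it I apply Proposition~\ref{prop difference of Bs} with $g$ specialised to $g_1$, obtaining
\[
B_2(g_1) \leq C\bigl(\ve^2\|f\|_{L^2}^2 + \alpha(\|g_1\|_{L^2}^2 + \|g_2\|_{L^2}^2) + \|\Lambda_1-\Lambda_2\|_{L^2\to L^2}(\|g_{1,\alpha}\|_{L^2}^2 + \|g_{2,\alpha}\|_{L^2}^2 + \|g_1\|_{L^2}^2 + \|f\|_{L^2}^2)\bigr).
\]
The bound $\|g_{j,\alpha}\|_{L^2}\leq C\alpha^{-1}\|f\|_{L^2}$ from Lemma~\ref{minimazation prblem properties} replaces the minimiser norms, and $\alpha<1$ absorbs the bare $\|f\|_{L^2}^2$ into $\alpha^{-2}\|f\|_{L^2}^2$. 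Taking square roots via $\sqrt{a+b+c}\leq\sqrt{a}+\sqrt{b}+\sqrt{c}$ yields
\[
\sqrt{B_2(g_1)}\leq C\bigl(\ve\|f\|_{L^2} + \sqrt{\alpha}(\|g_1\|_{L^2}+\|g_2\|_{L^2}) + \|\Lambda_1-\Lambda_2\|_{L^2\to L^2}^{1/2}(\alpha^{-1}\|f\|_{L^2}+\|g_1\|_{L^2})\bigr).
\]

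Assembling the three contributions then gives the announced inequality: the hypothesis $\|\Lambda_1-\Lambda_2\|_{L^2\to L^2}\leq 1$ implies $\|\Lambda_1-\Lambda_2\|_{L^2\to L^2}\leq\|\Lambda_1-\Lambda_2\|_{L^2\to L^2}^{1/2}$, which allows me to absorb the Proposition~\ref{prop stable recovery of inner products} contribution $C\|g_1\|_{L^2}\|\Lambda_1-\Lambda_2\|_{L^2\to L^2}\|h\|_{L^2}$ into the $\|\Lambda_1-\Lambda_2\|_{L^2\to L^2}^{1/2}\|g_1\|_{L^2}$ term already present in $\sqrt{B_2(g_1)}$. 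The cost bound $\|g_j\|_{L^2}\leq\ct(\ve)\|f\|_{L^2}$ is inherited directly from \eqref{bounds for control cost}.
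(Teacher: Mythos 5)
Your proposal is correct and follows essentially the same route as the paper: the same decomposition via adding and subtracting $(u_j^{g_1}(t),u_j^h(\tp))_{L^2}$, the same use of Proposition~\ref{prop stable recovery of inner products} for the bracket, Cauchy--Schwarz with $\sqrt{B_1(g_1)}$ and $\sqrt{B_2(g_1)}$, Proposition~\ref{prop difference of Bs} to transfer the control error to $u_2$, and the bound~\eqref{estim for galpha} together with $\norm{\Lambda_1-\Lambda_2}{L^2\to L^2}{}\leq 1$ to absorb the remaining terms. No gaps.
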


\begin{proof}
  We apply the approximate controllability Theorem~\ref{cor approx control} which gives the existence of $g_1=g_1(\varepsilon) \in C^\infty_0((t-s) \times \omega)$ satisfying 
  \begin{equation}
   \label{estim one for prop}
   \norm{u_1^{g_1}(t)-\mathds{1}_{M(\omega,s)}u^f_1(t)}{L^2}{2}\leq \ve^2 \norm{u^f_1(t)}{H^1}{2}\leq \varepsilon^2 \norm{f}{L^2}{2}.
\end{equation}
Notice that the function $g_1$ above is in fact the same as the once used in Proposition~\ref{prop difference of Bs}. 

The key point about Proposition~\ref{prop difference of Bs} is that it allows us to transfer the above estimate to $u_2$. Indeed, Proposition~\ref{prop difference of Bs} gives (with the same $g_1$)
\begin{align*}
     B_2(g_1)(t,s)&\leq B_1(g_1)(t,s)+  C \bigg(\varepsilon^2 \norm{f}{L^2}{2}+\alpha( \norm{g_{1}}{L^2}{2}+\norm{g_{2}}{L^2}{2})  \nonumber \\
    &\hspace{4mm} +\norm{\Lambda_1-\Lambda_2}{L^2 \to L^2}{}\left(\norm{g_{1,\alpha}}{L^2}{2}+\norm{g_{2,\alpha}}{L^2}{2}+\norm{g_1}{L^2}{2}+\norm{f}{L^2}{2}\right) \bigg),
\end{align*}
which using~\eqref{estim one for prop} and recalling the definition of $B_j$ in~\eqref{def of Bs} yields
\begin{align}
    \label{estim two for prop}
  \norm{u_2^{g_1}(t)-\mathds{1}_{M(\omega,s)}u^f_2(t)}{L^2}{2}&\leq \varepsilon^2 \norm{f}{L^2}{2} +  C \bigg( \varepsilon^2 \norm{f}{L^2}{2}+\alpha( \norm{g_{1}}{L^2}{2}+\norm{g_{2}}{L^2}{2})  \nonumber \\
    &\hspace{4mm} +\norm{\Lambda_1-\Lambda_2}{L^2 \to L^2}{}\left(\norm{g_{1,\alpha}}{L^2}{2}+\norm{g_{2,\alpha}}{L^2}{2}+\norm{g_1}{L^2}{2}+\norm{f}{L^2}{2}\right) \bigg).
\end{align}
We can now estimate the desired quantity as follows:
\begin{align}
&\left| \left( \mathds{1}_{M(\omega,s)}u_1^f(t),u_{1}^h(\tp)\right)_{L^2} -\left( \mathds{1}_{M(\omega,s)}u_2^f(t),u_{2}^h(\tp)\right)_{L^2} \right| \nonumber \\
&\leq \left| \left( \mathds{1}_{M(\omega,s)}u_1^f(t)-u_1^{g_1}(t),u_{1}^h(\tp)\right)_{L^2} -\left( \mathds{1}_{M(\omega,s)}u_2^f(t)-u_2^{g_1}(t),u_{2}^h(\tp)\right)_{L^2}\right| \nonumber \\
&\hspace{4mm}+\left| \left( u_1^{g_1}(t),u_{1}^h(\tp)\right)_{L^2} -\left(u_2^{g_1}(t),u_{2}^h(\tp)\right)_{L^2}\right| \nonumber \\
& \leq \norm{\mathds{1}_{M(\omega,s)}u_1^f(t)-u_1^{g_1}(t)}{L^2}{}\cdot\norm{u^h_1(\tp)}{L^2}{}+\norm{\mathds{1}_{M(\omega,s)}u_2^f(t)-u_2^{g_1}(t)}{L^2}{}\cdot \norm{u^h_2(\tp)}{L^2}{} \nonumber \\
&\hspace{4mm}+C \norm{g_1}{L^2}{}\norm{h}{L^2}{} \norm{\Lambda_1-\Lambda_2}{L^2\to L^2}{}  \nonumber \\
& \leq \norm{\mathds{1}_{M(\omega,s)}u_1^f(t)-u_1^{g_1}(t)}{L^2}{}\cdot\norm{h}{L^2}{}+\norm{\mathds{1}_{M(\omega,s)}u_2^f(t)-u_2^{g_1}(t)}{L^2}{}\cdot \norm{h}{L^2}{} \nonumber \\
& \label{term_power_one}\hspace{4mm}+C \norm{g_1}{L^2}{}\norm{h}{L^2}{} \norm{\Lambda_1-\Lambda_2}{L^2\to L^2}{},
\end{align}
where we used Proposition~\ref{prop stable recovery of inner products} and the regularity property for the wave equation. Putting together~\eqref{estim one for prop} and~\eqref{estim two for prop} yields
\begin{align*}
&\left| \left( \mathds{1}_{M(\omega,s)}u_1^f(t),u_{1}^h(\tp)\right)_{L^2} -\left( \mathds{1}_{M(\omega,s)}u_2^f(t),u_{2}^h(\tp)\right)_{L^2} \right| \nonumber \\
&\leq C \norm{h}{L^2}{} \bigg(  \varepsilon \norm{f}{L^2}{}+\sqrt{\alpha}(\norm{g_1}{L^2}{}+\norm{g_2}{L^2}{}) \nonumber \\
&\hspace{4mm}+\norm{\Lambda_1-\Lambda_2}{L^2 \to L^2}{1/2}\left( \norm{g_{1,\alpha}}{L^2}{}+\norm{g_{2,\alpha}}{L^2}{}+\norm{g_{1}}{L^2}{}+\norm{f}{L^2}{}\right)   \nonumber\bigg),
\end{align*}
where we used that since $\norm{\Lambda_1-\Lambda_2}{L^2 \to L^2}{}{}\leq 1$ the term $\norm{\Lambda_1-\Lambda_2}{L^2\to L^2}{} \leq \norm{\Lambda_1-\Lambda_2}{L^2\to L^2}{1/2} $ in~\eqref{term_power_one} can be neglected up to changing the constant $C$. The result then follows by combining the above with estimate~\eqref{estim for galpha} for $\norm{g_{j,\alpha}}{L^2}{}$.
\end{proof}

We can now reformulate the result of Lemma~\ref{lem inner products with domains of infl} in a more concise way by appropriately choosing the free parameters. This gives the following stability result.

\begin{prop}
\label{prop inner with domains of infl}
  Let $T>0$, $0<s<t<T/2$, $0<\tp <T/2$ and $f,h \in L^2((0,T)\times \omega)$. There are $\ve_0>0$ and $\ell>0$ such that for all $0<\ve \leq \ve_0$ one has the following stability estimate: 
  
  If $$\norm{\Lambda_1-\Lambda_2}{L^2\to L^2}{} \leq \left(\frac{\ve}{\ct(\ve)} \right)^\ell, 
 $$  
   then 
    $$
    \left| \left( \mathds{1}_{M(\omega,s)}u_1^f(t),u_{1}^h(\tp)\right)_{L^2} -\left( \mathds{1}_{M(\omega,s)}u_2^f(t),u_{2}^h(\tp)\right)_{L^2} \right| \leq C  \norm{f}{L^2}{} \norm{h}{L^2}{} \ve ,
    $$
    where the constant $C>0$ depends on $T,M,K, \omega, n$.
\end{prop}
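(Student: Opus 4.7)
The plan is to use Lemma~\ref{lem inner products with domains of infl} as a black box and optimize the free parameters $\alpha$ and $\varepsilon$ against one another, eliminating the $g_j$'s by the control-cost bound~\eqref{bounds for control cost} so that only $\varepsilon$, $\alpha$, $\ct(\varepsilon)$ and $\norm{\Lambda_1-\Lambda_2}{L^2 \to L^2}{}$ appear. The strategy is a simple but careful balancing act: after inserting $\norm{g_j}{L^2}{} \leq \ct(\varepsilon) \norm{f}{L^2}{}$ into the right-hand side of Lemma~\ref{lem inner products with domains of infl}, the estimate reads (modulo the overall factor $C\norm{f}{L^2}{}\norm{h}{L^2}{}$):
\begin{equation*}
\varepsilon + \sqrt{\alpha}\,\ct(\varepsilon) + \norm{\Lambda_1-\Lambda_2}{L^2 \to L^2}{1/2}\left(\tfrac{1}{\alpha} + \ct(\varepsilon)\right).
\end{equation*}

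First I would choose $\alpha$ to balance the first two terms: setting $\alpha = \varepsilon^2/\ct(\varepsilon)^2$ makes $\sqrt{\alpha}\,\ct(\varepsilon) = \varepsilon$, which is exactly the size we want. With this choice, the remaining two terms in the parenthesis are $\ct(\varepsilon)^2/\varepsilon^2$ and $\ct(\varepsilon)$; for $\varepsilon$ small enough the former dominates (one may assume $\ct(\varepsilon) \geq 1$ and $\varepsilon \leq 1$ by restricting to $\varepsilon \leq \varepsilon_0$), so the whole bound reduces, up to an absolute constant, to
\begin{equation*}
\varepsilon + \frac{\ct(\varepsilon)^2}{\varepsilon^2}\,\norm{\Lambda_1-\Lambda_2}{L^2 \to L^2}{1/2}.
\end{equation*}
Requiring the second term to be at most $\varepsilon$ amounts to $\norm{\Lambda_1-\Lambda_2}{L^2 \to L^2}{} \leq \varepsilon^6/\ct(\varepsilon)^4$, and this is ensured by the hypothesis $\norm{\Lambda_1-\Lambda_2}{L^2\to L^2}{} \leq (\varepsilon/\ct(\varepsilon))^\ell$ as soon as $\ell \geq 6$ (since then $(\varepsilon/\ct(\varepsilon))^\ell \leq \varepsilon^6/\ct(\varepsilon)^4$, using $\ct(\varepsilon) \geq 1$). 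Thus $\ell = 6$ suffices.

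I do not expect any real obstacle here: the only thing to check is that the $g_{j,\alpha}$-bound of Lemma~\ref{minimazation prblem properties} and the cost bound~\eqref{bounds for control cost} are genuinely usable to kill the $g$-dependent terms of the lemma, and that one has freedom to choose $\alpha$ of this size (indeed $\alpha = \varepsilon^2/\ct(\varepsilon)^2 \in (0,1)$ for small $\varepsilon$ since $\ct(\varepsilon) \to \infty$). The constant $C$ in the final estimate absorbs the one from the lemma together with the constant implicit in $\ct(\varepsilon) \leq \ct(\varepsilon)^2/\varepsilon^2$ for $\varepsilon$ small, and depends only on $T,M,K,\omega,n$ as inherited. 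The mildly delicate point, which is purely bookkeeping, is to verify that after fixing $\alpha$ the residual dependence on $\ct(\varepsilon)$ in the $\norm{\Lambda_1-\Lambda_2}{L^2 \to L^2}{1/2}\,\ct(\varepsilon)$ term is indeed dominated by the $1/\alpha$ contribution, so that the single hypothesis $\norm{\Lambda_1-\Lambda_2}{L^2\to L^2}{} \leq (\varepsilon/\ct(\varepsilon))^\ell$ suffices.
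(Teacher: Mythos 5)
Your proposal is correct and follows essentially the same route as the paper: the authors likewise set $\sqrt{\alpha}=\ve/\ct(\ve)$, absorb the $g_j$ terms via the cost bound~\eqref{bounds for control cost} and the estimate $\norm{g_{j,\alpha}}{L^2}{}\leq C\alpha^{-1}\norm{f}{L^2}{}$, and arrive at the same sufficient condition $\norm{\Lambda_1-\Lambda_2}{L^2\to L^2}{}\leq \ve^2\alpha^2=\ve^6\ct(\ve)^{-4}$. Your explicit choice $\ell=6$ (using $\ct(\ve)\geq 1$) is just a slightly more spelled-out version of the paper's final step.
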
 

 \begin{proof}
The proof follows by estimating the right hand side of the estimate of Lemma~\ref{lem inner products with domains of infl}. Given $\ve>0$ small we choose $\sqrt{\alpha}=\frac{\ve}{\ct(\ve)}.$ The bound~\eqref{bounds for control cost} gives then $\sqrt{\alpha}(\norm{g_1}{L^2}{}+\norm{g_2}{L^2}{})\leq C \ve \norm{f}{L^2}{}$ and therefore using Lemma~\ref{lem inner products with domains of infl} we get
\begin{align*}
&\left| \left( \mathds{1}_{M(\omega,s)}u_1^f(t),u_{1}^h(\tp)\right)_{L^2} -\left( \mathds{1}_{M(\omega,s)}u_2^f(t),u_{2}^h(\tp)\right)_{L^2} \right|  \\&\leq C \norm{h}{L^2}{} \bigg(  \ve \norm{f}{L^2}{}+\norm{\Lambda_1-\Lambda_2}{L^2 \to L^2}{1/2}\left(\frac{1}{\alpha}\norm{f}{L^2}{}+\norm{g_{1}}{L^2}{}\right)   \nonumber\bigg)   \\
&\leq C \norm{h}{L^2}{} \bigg(   \ve \norm{f}{L^2}{} +\norm{\Lambda_1-\Lambda_2}{L^2 \to L^2}{1/2}\norm{f}{L^2}{} \left(\frac{1}{\alpha}+\frac{1}{\sqrt{\alpha}}\right)   \nonumber\bigg).
\end{align*}
Now if $\norm{\Lambda_1-\Lambda_2}{L^2 \to L^2}{}\leq \varepsilon^2 \alpha^2=\ve^6 \ct(\ve)^{-4} $ the above gives the desired estimate.
\end{proof}


One can iterate the results of Lemma~\ref{lem inner products with domains of infl} and Proposition~\ref{prop inner with domains of infl} to obtain inner products on intersections of domains of influence.
\begin{cor}
\label{cor with intersections}
   Let $T>0$, $0<s, s^\prime <t<T/2$, $0<\tp <T/2$ and $f,h \in L^2((0,T)\times \omega)$. Consider as well $\omega^\prime \subset \omega $ an open set. Then there are $\ve_0>0$ and $\ell>0$ such that for all $0<\ve \leq \ve_0$ one has the following stability estimate: 
  
  If $$\norm{\Lambda_1-\Lambda_2}{L^2\to L^2}{} \leq \left(\frac{\ve}{\ct(\ve)} \right)^\ell, 
 $$
   then 
    \begin{multline*}
     \left| \left( \mathds{1}_{M(\omega,s)}\mathds{1}_{M(\omega^\prime,s^\prime)}u_1^f(t),u_{1}^h(\tp)\right)_{L^2} -\left( \mathds{1}_{M(\omega,s)}\mathds{1}_{M(\omega^\prime,s^\prime)}u_2^f(t),u_{2}^h(\tp)\right)_{L^2} \right|
      \leq C  \norm{f}{L^2}{} \norm{h}{L^2}{} \ve ,   
    \end{multline*}
where the constant $C>0$ depends on $T,M,K, \omega, n$.

\end{cor}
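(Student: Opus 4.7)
The plan is to iterate Proposition~\ref{prop inner with domains of infl} by first using approximate controllability to replace $\mathds{1}_{M(\omega,s)}u_j^f(t)$ by $u_j^{g_1}(t)$ for a common control $g_1$ supported in $(t-s,t)\times \omega$ (as in Lemma~\ref{lem inner products with domains of infl}), and then applying Proposition~\ref{prop inner with domains of infl} to the resulting inner product $(\mathds{1}_{M(\omega^\prime,s^\prime)} u_j^{g_1}(t), u_j^h(\tp))$ with $g_1$ playing the role of $f$ and the pair $(\omega^\prime,s^\prime)$ replacing $(\omega,s)$.

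First, for precisions $\ve_1>0$ and $\alpha\in(0,1)$ to be chosen, Theorem~\ref{cor approx control} combined with Proposition~\ref{prop difference of Bs} delivers $g_1 \in L^2((t-s,t)\times \omega)$ with $\|g_1\|_{L^2}\leq \ct(\ve_1)\|f\|_{L^2}$, and such that for both $j\in\{1,2\}$ the quantity $\|\mathds{1}_{M(\omega,s)}u_j^f(t) - u_j^{g_1}(t)\|_{L^2}$ is bounded by $C\|f\|_{L^2}$ times a sum of contributions of order $\ve_1$, $\sqrt{\alpha}\,\ct(\ve_1)$, and powers of $\|\Lambda_1-\Lambda_2\|_{L^2\to L^2}$ (exactly as in the proof of Lemma~\ref{lem inner products with domains of infl}).

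Second, decompose
\begin{align*}
\mathds{1}_{M(\omega,s)}\mathds{1}_{M(\omega^\prime,s^\prime)} u_j^f(t) = \mathds{1}_{M(\omega^\prime,s^\prime)}\bigl(\mathds{1}_{M(\omega,s)}u_j^f(t) - u_j^{g_1}(t)\bigr) + \mathds{1}_{M(\omega^\prime,s^\prime)} u_j^{g_1}(t).
\end{align*}
Since $|\mathds{1}_{M(\omega^\prime,s^\prime)}|\leq 1$ and $\|u_j^h(\tp)\|_{L^2}\leq C\|h\|_{L^2}$, the inner product of the first piece with $u_j^h(\tp)$ is controlled by Cauchy--Schwarz via the bound of the previous step. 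For the second piece, apply Proposition~\ref{prop inner with domains of infl} with source $g_1$ instead of $f$, the pair $(\omega^\prime,s^\prime)$ instead of $(\omega,s)$, and a second precision $\ve_2$. Provided $\|\Lambda_1-\Lambda_2\|_{L^2\to L^2}\leq (\ve_2/\ct(\ve_2))^{\ell_0}$ where $\ell_0$ is the exponent supplied by Proposition~\ref{prop inner with domains of infl}, this contributes at most $C\|g_1\|_{L^2}\|h\|_{L^2}\ve_2 \leq C\ct(\ve_1)\|f\|_{L^2}\|h\|_{L^2}\ve_2$.

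Finally, balance parameters: choose $\ve_1 \sim \ve$, $\sqrt{\alpha}\sim \ve/\ct(\ve)$ and $\ve_2 \sim \ve/\ct(\ve_1) \sim \ve/\ct(\ve)$. All error terms are then $O(\|f\|_{L^2}\|h\|_{L^2}\ve)$. The hypothesis $\|\Lambda_1-\Lambda_2\|_{L^2\to L^2}\leq(\ve_2/\ct(\ve_2))^{\ell_0}$ required by the second application is, since $\ct$ is non-increasing in its argument and $\ve/\ct(\ve)<1$, implied by a bound of the form $(\ve/\ct(\ve))^\ell$ for a sufficiently large exponent $\ell$. The main obstacle is precisely the propagation of the cost function through the iteration: each approximate control introduces an additional factor $\ct$, which must be absorbed by strengthening the hypothesis on $\|\Lambda_1-\Lambda_2\|_{L^2\to L^2}$. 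This is acceptable here because the corollary allows an $\ell$ larger than in Proposition~\ref{prop inner with domains of infl}, and the constant $C$ is still permitted to depend on $T,M,K,\omega,n$.
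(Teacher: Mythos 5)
Your proof is correct and follows essentially the same route as the paper: introduce a common approximate control to replace one of the two indicator functions by a genuine solution $u^{g}$, transfer the replacement error to $u_2$ via Proposition~\ref{prop difference of Bs}, and then apply the single-indicator result to the remaining inner product with that control as the new source. The only (immaterial) difference is the order — you peel off $\mathds{1}_{M(\omega,s)}$ first, whereas the paper first controls $\mathds{1}_{M(\omega^\prime,s^\prime)}$ by a $g^\prime_1\in C^\infty_0((t-s^\prime,t)\times \omega^\prime)$ so that Lemma~\ref{lem inner products with domains of infl} applies verbatim to the remaining term with source $g^\prime_1\in L^2((0,T)\times\omega)$; the compounding of the control cost through the iteration that you flag (and whose absorption into a larger $\ell$ you argue somewhat loosely via monotonicity of $\ct$) is likewise present, and glossed over, in the paper's own two-line proof.
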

\begin{proof}
We consider $g^\prime_1=g^\prime_1(\varepsilon) \in C^\infty_0((t-s^\prime) \times \omega^\prime)$ satisfying 
  \begin{equation*}
   \norm{u_1^{g^\prime_1}(t)-\mathds{1}_{M(\omega^\prime,s^\prime)}u^f_1(t)}{L^2}{2}\leq \varepsilon^2 \norm{f}{L^2}{2},
\end{equation*} 
and write
\begin{align*}
    &\left| \left( \mathds{1}_{M(\omega,s)}\mathds{1}_{M(\omega^\prime,s^\prime)}u_1^f(t),u_{1}^h(\tp)\right)_{L^2} -\left( \mathds{1}_{M(\omega,s)}\mathds{1}_{M(\omega^\prime,s^\prime)}u_2^f(t),u_{2}^h(\tp)\right)_{L^2} \right| \\
    & \leq \bigg| \left( \mathds{1}_{M(\omega,s)} \mathds{1}_{M(\omega^\prime,s^\prime)}u_1^f(t)- \mathds{1}_{M(\omega,s)}u_1^{g^\prime_1}(t),u_{1}^h(\tp)\right)_{L^2}\\
    &\hspace{4mm}- \left( \mathds{1}_{M(\omega,s)} \mathds{1}_{M(\omega^\prime,s^\prime)}u_2^f(t)- \mathds{1}_{M(\omega,s)}u_2^{g_1}(t),u_{2}^h(\tp)\right)_{L^2}\bigg| \\
&\hspace{4mm}+\left| \left( \mathds{1}_{M(\omega,s)}u_1^{g_1}(t),u_{1}^h(\tp)\right)_{L^2} -\left( \mathds{1}_{M(\omega,s)}u_2^{g_1}(t),u_{2}^h(\tp)\right)_{L^2}\right|.
\end{align*}
We can then apply Lemma~\ref{lem inner products with domains of infl} directly for the second term and follow the same steps as in the proof of Lemma~\ref{lem inner products with domains of infl} for the first one. The corollary follows.
\end{proof}

Next we recover the point values.

\begin{prop}
 \label{prop point values}
  Let $T>2 \mathcal{L}(K, \omega)$ and $f,h \in H_0^k((0,T)\times \omega)$ with $k > n/2$. There are constants $\ve_0>0$ and $\ell=\ell(n)$ such that for all $0<\ve \leq \ve_0$ one has the following stability estimate: 
  
  If $$\norm{\Lambda_1-\Lambda_2}{L^2\to L^2}{} \leq \left(\frac{\ve}{\ct(\ve^\ell)} \right)^\ell, 
 $$
   then 
    $$
    \left| u_1^f(t,x) \overline{u_{1}^h(\tp,x)} -u_2^f(t,x) \overline{u_{2}^h(\tp,x)} \right| \leq  C\norm{f}{H^k}{} \norm{h}{H^k}{}\ve ,
    $$
   for all $0<t, \tp <T/2$ and $x \in K$ with a constant $C>0$ that depends only on $T,M,K, \omega, n$.
\end{prop}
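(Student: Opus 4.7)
The plan is to represent the point value $u_j^f(t,x)\overline{u_j^h(\tp,x)}$ as the limit of averages over shrinking neighborhoods of $x$ that can be written as Boolean combinations of domains of influence, and then combine the stability of Corollary~\ref{cor with intersections} with Sobolev embedding.

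For each $x \in K$, I would first construct a family of sets $U_r(x) \ni x$ satisfying $\operatorname{diam}(U_r(x)) \leq Cr$ and $|U_r(x)| \geq c r^n$, uniformly for $x \in K$, by intersecting a bounded number of domains of influence $M(\omega_i, s_i)$ with $\omega_i \subset \omega$ (small open balls around well-chosen points $y_i$) and $s_i$ picked so that $x$ lies just inside each $M(\omega_i, s_i)$ at depth $\sim r$. When a pure intersection is geometrically insufficient (for instance if $\omega$ lies on one side of $K$), one additionally uses set complements $M(\omega_i, s_i)^c$, which are resolved by linearity into a bounded number of pure intersections. The hypothesis $T > 2\mathcal{L}(K,\omega)$ guarantees that the relevant radii $s_i$ can be chosen in $(0, T/2)$.

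Iterating the proof of Corollary~\ref{cor with intersections} to intersections of $N = O(n)$ factors (the exponent is enlarged by a factor depending only on $n$) and expanding over complements by linearity, one obtains: for a new parameter $\mu > 0$, if $\|\Lambda_1 - \Lambda_2\|_{L^2 \to L^2} \leq (\mu/\ct(\mu))^{\ell_1}$ then
\[
\left| \left(\mathds{1}_{U_r(x)} u_1^f(t), u_1^h(\tp)\right)_{L^2} - \left(\mathds{1}_{U_r(x)} u_2^f(t), u_2^h(\tp)\right)_{L^2} \right| \leq C \mu \|f\|_{L^2}\|h\|_{L^2}.
\]
Dividing by $|U_r(x)| \gtrsim r^n$ gives stability to precision $C\mu r^{-n}\|f\|_{L^2}\|h\|_{L^2}$ for the average of $u_j^f(t,\cdot)\overline{u_j^h(\tp,\cdot)}$ over $U_r(x)$. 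On the other hand, the Sobolev algebra property for $k > n/2$, combined with the hyperbolic energy estimates permitted by $\|q_j\|_{C^m} \leq M$ with $m > n$, places the product $u_j^f(t,\cdot)\overline{u_j^h(\tp,\cdot)}$ in $H^k(\R^n) \hookrightarrow C^\alpha(\R^n)$ with $\alpha := \min(1, k - n/2) > 0$, so that
\[
\left| \frac{1}{|U_r(x)|}\int_{U_r(x)} u_j^f(t,y)\overline{u_j^h(\tp,y)}\,dy - u_j^f(t,x)\overline{u_j^h(\tp,x)} \right| \leq C r^\alpha \|f\|_{H^k}\|h\|_{H^k}.
\]

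Combining the two estimates, the target quantity is bounded by $C(\mu r^{-n} + r^\alpha)\|f\|_{H^k}\|h\|_{H^k}$. Choosing $r \sim \mu^{1/(n+\alpha)}$ balances both terms and yields $C\mu^{\alpha/(n+\alpha)}$; setting $\mu = \ve^{(n+\alpha)/\alpha}$ produces the desired bound $C\ve \|f\|_{H^k}\|h\|_{H^k}$. The smallness condition on $\|\Lambda_1 - \Lambda_2\|$ then reads $\|\Lambda_1-\Lambda_2\|_{L^2\to L^2} \leq (\ve^L/\ct(\ve^L))^{\ell_1}$ with $L = (n+\alpha)/\alpha$, which (since $\ct$ is nonincreasing and $\geq 1$) is implied by the form $(\ve/\ct(\ve^\ell))^\ell$ appearing in the proposition upon setting $\ell := L\ell_1$, which depends only on $n$.

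The main obstacle is the geometric construction in the first step: when $\omega$ does not surround $K$, pure intersections of balls centered at points of $\omega$ degenerate near $x$ (all the relevant outward normals lie in a half-space), and one is forced to employ complements $M(\omega_i, s_i)^c$ with careful bookkeeping so that the number of expansion terms is bounded and the final exponent $\ell$ depends only on the dimension. Once this construction is available uniformly over $K$, the iteration of Corollary~\ref{cor with intersections} from two factors to $O(n)$ factors and the exponent arithmetic above are routine.
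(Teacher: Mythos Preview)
Your strategy---average the product $u_j^f(t,\cdot)\overline{u_j^h(\tp,\cdot)}$ over a shrinking Boolean combination of domains of influence, compare to the point value via Sobolev regularity, and balance the two errors---is exactly what the paper does. The difference is entirely in the geometric construction, and here the paper is both simpler and more honest about what is achievable.

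The paper uses only \emph{two} domains of influence and a single set difference. With $y\in\partial\omega$ the nearest boundary point to $x$ and $s=\dist(x,\omega)$, it takes two balls $B(-r\nu(y),r)\subset B(-2r\nu(y),2r)\subset\omega$ internally tangent to $\partial\omega$ at $y$, and sets
\[
A(\eta)=M\bigl(B(-r\nu(y),r),\,s+\eta\bigr)\setminus M\bigl(B(-2r\nu(y),2r),\,s-\eta\bigr),
\]
a lens between two concentric-axis spheres. Then $x\in A(\eta)$, $\operatorname{diam}A(\eta)\lesssim\sqrt{\eta}$ and $|A(\eta)|\sim\eta^{(n+1)/2}$; in your parametrisation this is $|U_r|\sim r^{\,n+1}$, not $r^n$. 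Your claim $|U_r(x)|\geq cr^n$ is precisely the point you flag as ``the main obstacle'', and it is in general \emph{not} attainable with a bounded number of factors: when $\omega$ is small relative to $\dist(K,\omega)$, every admissible sphere through $x$ has its outward normal in a narrow cone, so tangential confinement can come only from curvature, which forces the one-power loss. Since this affects only the value of $\ell(n)$ and not the conclusion, your argument goes through once you replace $r^n$ by $r^{n+1}$---but then there is no reason to use $O(n)$ factors, and the paper's two-ball lens already does the job (so no iteration of Corollary~\ref{cor with intersections} beyond two factors is required; the complement is handled by linearity, $\mathds{1}_{A(\eta)}=\mathds{1}_{M_1}-\mathds{1}_{M_1}\mathds{1}_{M_2}$). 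The paper also uses the Lipschitz bound coming from $u_j\in H^{k+1}\hookrightarrow C^1$ (since $k>n/2$) rather than $C^\alpha$, which slightly sharpens the exponent arithmetic but is otherwise the same step.
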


\begin{proof}
Let $x \in K$. We denote $\dist(x ,\omega)=s$ and consider $y \in \partial \omega$ such that $\dist(x,y)=s$. Notice that $s$ moves on the compact interval $s \in [s_0,s_1]$ with $s_0= \dist(K, \omega)>0$ and $s_1= \mathcal{L}(K, \omega) <T/2$. We now want to apply Corollary~\ref{cor with intersections} to domains that eventually collapse to a point in order to transfer the estimate from inner products to point values. To do so we consider the set $A(\eta)$ depending on a small parameter $\eta>0$ defined by
$$
A(\eta):= M(B(-r \nu(y), r), s+\eta) \backslash M(B(-2r \nu(y),2r), s- \eta),
$$
where we denote by $\nu(y)$ the outward unit normal vector at $y \in \partial \omega$ and by $B(z,r)$ the ball of center $z$ and radius $r$. The radius $r$ is chosen sufficiently small that $B(-r \nu(y), r) \subset \omega$ and it can be chosen uniformly for any $y \in  \partial \omega$ by taking $r \leq \frac{r_0}{4}$ with $r_0$ the injectivity radius of $\omega$ in boundary normal coordinates. Remark that in fact
$$
A(\eta)=B(-r \nu(y), s+r+\eta)\backslash B(-2r \nu(y),2r+s-\eta).
$$

To alleviate notation we define 
$$
\ff(x):=u_1^f(t,x) \overline{u_{1}^h(\tp,x)} -u_2^f(t,x) \overline{u_{2}^h(\tp,x)},
$$
and estimate
\begin{align}
\label{two terms for estim of ff}
|\ff(x)|\leq \left|\frac{1}{|A(\eta)|} \int_{A(\eta)} \ff(z)-\ff(x)dz \right|+\left|\frac{1}{|A(\eta)|} \int_{A(\eta)}\ff(z)dz \right |,
\end{align}
where we denote by $|A(\eta)|$ the volume of the set $A(\eta)$. 

We start with the second term in~\eqref{two terms for estim of ff}. We remark that since $\omega$ is smooth then for small $\eta$ the set $A(\eta)$ behaves like a hyperspherical cap with a height that scales in $\eta$ and a radius that scales in $\sqrt{\eta}$ (see Figure~\ref{hyper cap}). Consequently there is a constant $C$ (depending on $s \in [s_0,s_1], \omega$ and $n$) such that $|A(\eta)| \sim C \eta^{\frac{n+1}{2}}$. Given $\ve>0$ small we can then apply Corollary~\ref{cor with intersections} to find that $\norm{\Lambda_1-\Lambda_2}{L^2\to L^2}{} \leq \left(\frac{\ve}{\ct(\ve)} \right)^\ell$ implies $\left| \int_{A(\eta)}\ff(z)dz \right | \leq C \norm{h}{L^2}{}   \norm{f}{L^2}{} \ve$ and therefore
\begin{equation}
\label{second term for ff}
\left|\frac{1}{|A(\eta)|} \int_{A(\eta)}\ff(z)dz \right | \leq C\frac{ \norm{h}{L^2}{}   \norm{f}{L^2}{} \ve}{\eta^{\frac{n+1}{2}}}.
\end{equation}
Notice that we were ably to apply Corollary~\ref{cor with intersections} thanks to the assumption $T>2\mathcal{L}(K, \omega)$ which guarantees that $s < T/2$.

To control the first term we notice that the solutions $u_j(t)$ lie in $H^{k+1}$ and consequently for $k>n/2$ we have by the Sobolev embedding that they are $C^1$ and in particular $C_{t,f,h}$-Lipschitz. The Lipschitz constant $C_{t,f,h}$ can be bounded from above by $C_{t,f,h} \leq \sum_j \norm{\nabla (u_j^f(t,x)u_j^h(t^\prime,x)) }{L^\infty}{} \leq C \norm{f}{H^k}{} \norm{h}{H^k}{}$ for a constant $C$ depending only on $M,K,\omega,T$. This is thanks to higher regularity estimates for the wave equation(see for instance~\cite[Theorem 6, Chapter 7.2]{Evans:98}). This gives
\begin{align}
   &\left|\frac{1}{|A(\eta)|} \int_{A(\eta)} \ff(z)-\ff(x)dz \right| \leq  \frac{C \norm{f}{H^k}{} \norm{h}{H^k}{}}{|A(\eta)|} \int_{A(\eta)}|x-z| dz  \nonumber \\  & \leq \frac{ C\norm{f}{H^k}{} \norm{h}{H^k}{} }{|A(\eta)|} \int_{A(\eta)}\sqrt{\eta} dz 
   \label{first term for ff}
   \leq C \norm{f}{H^k}{} \norm{h}{H^k}{} \sqrt{\eta},
\end{align}
where we used the fact that for small $\eta$ the set $A(\eta)$ is contained in a ball centered at $x$ with a radius $C \sqrt{\eta}$ where the constant $C$ depends only on $n,T,K,\omega$. Putting together~\eqref{second term for ff} and~\eqref{first term for ff} and choosing $\ve=\eta^{\frac{n+2}{2}}$ yields then
$$
|\ff(x)|\leq C\norm{f}{H^k}{} \norm{h}{H^k}{}  \ve^{\frac{1}{n+2}} +C\norm{f}{H^k}{} \norm{h}{H^k}{} \ve^{\frac{1}{n+2}},
$$
and the proposition follows by changing $\ve$ to $\ve^{n+2}.$
\end{proof}

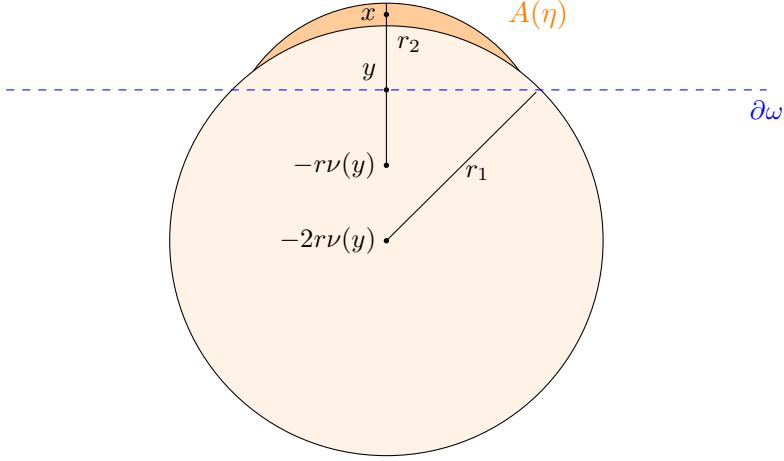
\begin{figure}
    \centering
    \begin{tikzpicture}
  
   \coordinate (O) at (0,0) ;

    \filldraw[fill=orange!40] (0,1) circle (2+0.15);
   \filldraw[fill=orange!10, semitransparent] (0,0) circle (3-0.15);


  \draw(0,0)--(1.97,1.97) ; 
  \draw(0,1)--(0,3+0.15)  ;
 \draw node at (0.3,2.6)[font=\small] {$r_2$} ;
  \draw node at (1.2,0.9)[font=\small] {$r_1$} ;

  \draw node at (2, 3) [orange!100]  {$A(\eta)$};

   \fill (0, 0)  circle[radius=1pt]   node [left, font= \small]  {$-2r \nu(y)$};
     \fill (0, 1)  circle[radius=1pt]   node [left, font= \small]  {$-r \nu(y)$};
       \fill (0, 2)  circle[radius=1pt]   node [above left, font= \small]  {$y$};
         \fill (0, 3)  circle[radius=1pt]   node [left, font= \small]  {$x$};
    
\draw[blue, dashed](-5,2)--(5,2)  node[ pos=1,below, font=\small] {$\partial \omega$} ;

 \end{tikzpicture}

    \caption{The domain $A(\eta)$ in darker color, used in the proof of Proposition~\ref{prop point values}. As $\eta \to 0$  $A(\eta)$ collapses to the point $x$. The two balls correspond to the two domains of influence under consideration and the radii are $r_1=2r+s-\eta$ and $r_2=r+s+\eta$.}
    \label{hyper cap}
\end{figure}

\subsection{Geometric optics solutions}

We are now ready to recover the difference of two solutions $u^f_1-u^f_2$ associated to the potentials $q_1,q_2$. To do so, we need to apply Proposition~\ref{prop point values} to some well chosen sources $h$ that correspond to localised solutions. These are the geometric optics solutions and their construction is classical. Here we essentially follow the presentation in~\cite{kian2019recovery,Kian2019, nursultanov2023introduction}.

\begin{prop}
    \label{prop diff of sol}
    Let $T>2 \mathcal{L}(K, \omega)$ and $f\in H_0^k((0,T)\times \omega)$ with $k > n/2$. There are  constants $\ve_0>0$ and $\ell=\ell(n)>0$ such that for all $0<\ve \leq \ve_0$ one has the following stability estimate: 
  
  If $$\norm{\Lambda_1-\Lambda_2}{L^2\to L^2}{} \leq \left(\frac{\ve}{\ct(\ve^\ell)} \right)^\ell,
 $$  
   then 
    $$
    \left| u_1^f(t,x)  -u_2^f(t,x)  \right| \leq  C\norm{f}{H^k}{}\ve ,
    $$
   for all $0<t<T/2$ and $x \in K$ with a constant $C>0$ that depends only on $T,M,K, \omega, n$.
\end{prop}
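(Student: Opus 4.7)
The plan is to convert the bilinear estimate of Proposition~\ref{prop point values} into a pointwise bound on $u_1^f(t,x)-u_2^f(t,x)$ by pairing $u_j^f$ with a geometric optics test wave. The basic identity is
\begin{equation*}
(u_1^f-u_2^f)(t,x)\,\overline{u_1^h(\tp,x)} = \bigl[u_1^f(t,x)\overline{u_1^h(\tp,x)}-u_2^f(t,x)\overline{u_2^h(\tp,x)}\bigr] + u_2^f(t,x)\,\overline{(u_2^h-u_1^h)(\tp,x)},
\end{equation*}
so if one can design an admissible source $h=h_\lambda$ (indexed by a large frequency $\lambda$) such that $|u_j^{h_\lambda}(\tp,x)|\gtrsim 1$ uniformly in $j,\lambda$ while $|u_1^{h_\lambda}(\tp,x)-u_2^{h_\lambda}(\tp,x)|=O(\lambda^{-1})$, then dividing by $\overline{u_1^{h_\lambda}(\tp,x)}$, applying Proposition~\ref{prop point values} to the bracket, and optimizing in $\lambda$ will produce the claimed bound.

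Such $h_\lambda$ comes from a WKB ansatz. Fix $x\in K$ and, using $T>2\mathcal{L}(K,\omega)$, choose $\tp\in(\dist(x,\omega),T/2)$ together with $y_0\in\omega$ such that $|x-y_0|=\tp$; set $\xi:=(x-y_0)/\tp$ so that $|\xi|=1$, and take the phase $\phi(t,y):=y\cdot\xi-t$, which solves the eikonal equation. Pick $\tau\in C_0^\infty(\R)$ that vanishes on $(-\infty,0]$ and equals $1$ on $[T_0,\infty)$ for a small $T_0>0$, and $\sigma\in C_0^\infty(\R^n)$ a bump of small support with $\sigma(y_0)=1$. Put $a_0(t,y):=\tau(t)\sigma(y-t\xi)$ and
\begin{equation*}
h_\lambda:=(\p_t^2-\Delta)\bigl(a_0\,e^{i\lambda\phi}\bigr).
\end{equation*}
Because the eikonal holds and $a_0$ is transported along the ray, the principal term of $h_\lambda$ is $-2i\lambda\,\tau'(t)\sigma(y-t\xi)\,e^{i\lambda\phi}$, so making $T_0$ and $\supp\sigma$ small places $\supp(h_\lambda)\subset(0,T_0)\times\omega$, rendering $h_\lambda$ admissible; moreover $\|h_\lambda\|_{H^k}\leq C\lambda^{k+1}$. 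Extending the expansion to order $N$ by adding corrections $\lambda^{-m}a_{m,j}e^{i\lambda\phi}$ whose amplitudes solve transport equations along the ray driven by lower-order terms (and hence by $q_j$), we obtain approximate solutions $w_{j,\lambda}$ of $(\p_t^2-\Delta+q_j)w_{j,\lambda}=h_\lambda$ with zero Cauchy data and $L^2$ residue $O(\lambda^{-N})$. For $N$ large enough, energy estimates combined with Sobolev embedding give $\|u_j^{h_\lambda}-w_{j,\lambda}\|_{L^\infty}\leq C\lambda^{-1}$. Since $w_{j,\lambda}(\tp,x)=e^{i\lambda\phi(\tp,x)}+O(\lambda^{-1})$ and the $j$-dependence appears only at order $\lambda^{-1}$, one concludes
\begin{equation*}
|u_j^{h_\lambda}(\tp,x)|\geq 1/2 \quad\text{for $\lambda$ large}, \qquad |u_1^{h_\lambda}(\tp,x)-u_2^{h_\lambda}(\tp,x)|\leq C/\lambda.
\end{equation*}

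Plugging these into the identity, applying Proposition~\ref{prop point values} with small parameter $\tilde{\ve}$ to the bracketed term, and using $\|u_2^f\|_{L^\infty}\leq C\|f\|_{H^k}$ (Sobolev embedding, since $k>n/2$), one arrives at
\begin{equation*}
|u_1^f(t,x)-u_2^f(t,x)|\leq C\|f\|_{H^k}\bigl(\lambda^{k+1}\tilde{\ve}+\lambda^{-1}\bigr).
\end{equation*}
Balancing $\lambda=\tilde{\ve}^{-1/(k+2)}$ yields the bound $C\|f\|_{H^k}\tilde{\ve}^{1/(k+2)}$; setting $\tilde{\ve}:=\ve^{k+2}$ produces exactly $C\|f\|_{H^k}\ve$. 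The hypothesis of Proposition~\ref{prop point values} at level $\tilde{\ve}$ transforms into a hypothesis of the required form at level $\ve$ after redefining $\ell$ to absorb the factor $k+2$ together with the exponent of the previous proposition; the new $\ell$ still depends only on $n$ (via any fixed choice $k=\lceil n/2\rceil+1$).

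The delicate point is the geometric optics step: arranging the cutoffs so that $h_\lambda$ is genuinely supported in $(0,T)\times\omega$ while the propagated wave $u_j^{h_\lambda}$ still reaches the target with unit-sized value, and carrying the WKB expansion to high enough order to guarantee uniform-in-$j$ pointwise remainder bounds of the right size. Both steps are standard in inverse problems of this kind and a convenient reference is \cite{kian2019recovery}.
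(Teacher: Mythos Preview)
Your overall strategy---using the algebraic identity to isolate $(u_1^f-u_2^f)(t,x)$ pointwise and pairing against a geometric-optics test wave for which one has both a uniform lower bound $|u_j^{h_\lambda}(\tp,x)|\geq 1/2$ and a pointwise $O(\lambda^{-1})$ bound on $u_1^{h_\lambda}-u_2^{h_\lambda}$ via a higher-order WKB expansion---is sound and in fact more direct than the paper's argument. There is, however, a concrete slip in your construction of the source: for $n\geq 2$ the function $h_\lambda=(\partial_t^2-\Delta)(a_0 e^{i\lambda\phi})$ is \emph{not} supported in $(0,T_0)\times\omega$. Only the $O(\lambda)$ term $-2i\lambda\,\tau'(t)\sigma(y-t\xi)\,e^{i\lambda\phi}$ has that support; the $O(1)$ part contains $\tau(t)\bigl[(\xi\cdot\nabla)^2-\Delta\bigr]\sigma(y-t\xi)\,e^{i\lambda\phi}$, which does not vanish along the ray $\{y_0+t\xi:t\geq T_0\}$ and hence leaves $\omega$. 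The fix is standard: use a \emph{spatial} cutoff $\tilde\chi\in C_0^\infty(\omega)$ equal to $1$ on $\supp a_0(0,\cdot)$ and set $h_\lambda:=-(\partial_t^2-\Delta)\bigl(\tilde\chi\, a_0\, e^{i\lambda\phi}\bigr)$; then $\supp h_\lambda\subset(0,T)\times\omega$ trivially, and $u_j^{h_\lambda}=(1-\tilde\chi)\,a_0 e^{i\lambda\phi}+\text{corrections}$ still equals $a_0 e^{i\lambda\phi}$ near $x\in K$. This is exactly the device the paper uses, and with this adjustment your argument goes through.

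For comparison, the paper proves the proposition using only \emph{zeroth-order} geometric optics: the remainder $R_{j,\sigma}$ is controlled merely in $L^2$ (decaying like $\sigma^{-1}$), which precludes a direct pointwise argument. Instead the paper integrates the bilinear estimate of Proposition~\ref{prop point values} over a shrinking space--time neighbourhood of the target, handles the $R_{j,\sigma}$ contribution in $L^2$, and then re-localises to the point by the same Lipschitz trick as in Proposition~\ref{prop point values}. The payoff is that this step of the paper consumes no regularity on $q$ beyond $L^\infty$ (cf.~Remark~\ref{rem on optics higher order}); your higher-order expansion needs derivatives of $q$ to bound the correctors $a_{m,j}$, but since the main theorem already assumes $q\in C^m$ with $m>n$ this is harmless in the end.
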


\begin{proof}
Let $x_0 \in K$. We consider as before $y \in \partial \omega$ with $\dist(x ,\omega)=\dist(x_0,y):=s$. We consider the outward unit normal vector $\nu(y)$ at $y \in \partial \omega$ and define the line $\beta(t)=x_0-(s+\delta-t)\nu(y)$ for $\delta>0$ small.  We have in particular $\beta(0) \in \omega$, $\beta(s+\delta)=x_0$ and $\beta(\delta)=y$.

Consider now a cut-off $\chi \in C^{\infty}_0((B(0,2 \eta); [0,1])$ with $\chi=1$ on $B((0, \eta))$. For $\eta=\eta(\delta)>0$ sufficiently small the function $a(t,x):=\chi(x - \beta(t))$ satisfies $\supp(a(0, \cdot)) \subset \omega$. We consider as well another cut-off $\tchi \in C^\infty_0(\omega)$ such that $\tchi=1$ on the support of $a(0,\cdot)$. We define then 
$$
v_j(t,x)=a(t,x)e^{i \sigma (t-x \cdot \nu(y))}+R_{j, \sigma}(t,x),
$$
where $\rj$ satisfies 
\begin{equation*}
    \begin{cases}
    (\p^2_t-\Delta+q_j)\rj=-e^{i \sigma (t-x \cdot \nu(y))} (\partial^2_t-\Delta+q_j(1 - \tchi(x)))a & \textnormal{in} \:(0,T) \times \R^n,\\
    \rj(0, \cdot)=\p_t \rj(0, \cdot)=0  & \textnormal{in} \: \R^n.
    \end{cases}
\end{equation*}
Define finally $\tv_j:=v_j- \tchi(x) a(t,x)e^{i \sigma (t-x \cdot \nu(y))}=(1- \tchi)a(t,x)e^{i \sigma (t-x \cdot \nu(y))}+R_{j, \sigma}(t,x)$. As in~\cite[Lemma 3.5]{Kian2019} we have that $(\p^2_t-\Delta+q_j) v_j=0$. Moreover, the support properties of $a$ and $\tchi$ combined with $ \rj(0, \cdot)=\p_t \rj(0, \cdot)=0 $ imply that $\tv_j(0, \cdot)= \p \tv_j(0, \cdot)=0$. We find then that $\tv_j$ satisfy
\begin{equation*}
    \begin{cases}
    (\p^2_t-\Delta+q_j)\tv_j=-(\partial^2_t-\Delta) \left(\tchi(x)a(t,x) e^{i \sigma (t-x \cdot \nu(y))}  \right) & \textnormal{in} \:(0,T) \times \R^n,\\
    \tv_j(0, \cdot)=\p_t \tv_j(0, \cdot)=0  & \textnormal{in} \: \R^n.
    \end{cases}
\end{equation*}
This means that $\tv_j=u^h_j$ with $h:=-(\partial^2_t-\Delta) \left(\tchi(x)a(t,x) e^{i \sigma (t-x \cdot \nu(y))}  \right)$. In particular we have that $\norm{h}{H^k}{}\leq C \frac{\sigma^{k+2}}{\delta^{k+2}}$. Applying Proposition~\ref{prop point values} yields that under the assumption $\norm{\Lambda_1-\Lambda_2}{L^2\to L^2}{} \leq \left(\frac{\ve}{\ct(\ve^\ell)} \right)^\ell$ one has
\begin{equation}
 \label{estim after appl prop}
  \left| u_1^f(t,x) \overline{\tv_1(\tp,x)} -u_2^f(t,x) \overline{\tv_2(\tp,x)} \right| \leq  C\norm{f}{H^k}{} \frac{\sigma^{k+2} \ve}{\delta^{k+2}}.
\end{equation}
Let $\psi \in C^\infty_0((s+\delta-2\eta, s+\delta+2\eta);[0,1])$ with $\psi=1$ on $[s+\delta-\eta, s+\delta+\eta]$. Then multiplying~\eqref{estim after appl prop} by $\psi(\tp)$ and integrating in space and time gives
\begin{align}
\label{estim with psi}
    \int_0^T \int_{K }  \left| u_1^f(t,x) \overline{\tv_1(\tp,x)} -u_2^f(t,x) \overline{\tv_2(\tp,x)} \right| \psi(\tp) d \tp d x \leq C\norm{f}{H^k}{} \frac{\sigma^{k+2} \ve}{\delta^{k+2}},
\end{align}
where $C$ depends on $T,K$.
We estimate then as follows:
\begin{align}
 &\int_0^T \int_{K \backslash \omega} \left| (u^f_1(t,x)-u_2^f(t,x))a(\tp,x)  \right| \psi(\tp) d \tp dx  \nonumber \\
 &\leq      \int_0^T \int_{K } \left| u_1^f(t,x) \overline{\tv_1(\tp,x)} -u_2^f(t,x) \overline{\tv_2(\tp,x)} \right| \psi(\tp) d \tp dx \nonumber \\& 
 \label{two terms for integral}
 \hspace{4mm}+
\int_0^T \int_{K } \left| u_1^f(t,x)\overline{R_{1, \sigma}}(\tp,x)- u_2^f(t,x)\overline{R_{2, \sigma}}(\tp,x)    \right| \psi(\tp) d \tp dx.
\end{align}
 For the second term we notice that as in~\cite[Lemma 2.2]{kian2019recovery} we have that $\norm{\rj}{L^2}{} \to 0$ as $\sigma \to + \infty$ and in fact \footnote{One just needs to use a quantitative version of the Riemann-Lebesgue lemma which follows after integrating by parts once. The rest of the proof remains the same.} we have
 $$
 \norm{\rj}{L^2}{} \leq \frac{C}{\sigma}.
 $$
We obtain as a consequence:
\begin{align}
 &\int_0^T \int_{K } \left| u_1^f(t,x)\overline{R_{1, \sigma}}(\tp,x)- u_2^f(t,x)\overline{R_{2, \sigma}}(\tp,x)    \right| \psi(\tp) d \tp dx  \nonumber \\ \label{estim with rj} &\leq C \norm{u_1^f(t, \cdot)}{L^2}{} \cdot \norm{R_{1, \sigma}}{L^2}{}+C \norm{u_2^f(t, \cdot)}{L^2}{} \cdot \norm{R_{2, \sigma}}{L^2}{}\leq  \frac{C \norm{f}{L^2}{}}{\sigma}.
\end{align}
Putting~\eqref{estim with psi} and~\eqref{estim with rj} in~\eqref{two terms for integral} we get
\begin{align}
\label{final estim with integral}
\int_0^T \int_{K \backslash \omega} \left| (u^f_1(t,x)-u_2^f(t,x))a(\tp,x)  \right| \psi(\tp) d \tp dx \leq  C \norm{f}{H^k}{}\left(\frac{\sigma^{k+2} \ve}{\delta^{k+2}} +\frac{1}{\sigma}\right).    
\end{align}
We recall that the support of $a(\tp,x) \psi(\tp)$ localizes arbitrarily close to $\{x_0\}$ as $\eta \to 0$. We can use the same argument as in the proof of Proposition~\ref{prop point values} to pass from the integral to a point value estimate. To do so we use the regularity of $u^f_j$ and lose some powers of $\eta$ depending on the dimension only. Choosing the parameters $\delta, \sigma, \eta$ as functions of $\ve$ we find then that~\eqref{final estim with integral} implies
 $$
    \left| u_1^f(t,x_0)  -u_2^f(t,x_0)  \right| \leq  C\norm{f}{H^k}{}\ve^{c^{-1}(n)},
    $$
for a constant $C>0$ depending only on $T,K,M,n$ and $c(n)>1$ depending on the dimension only. Since $x_0$ is an arbitrary point in $K$ the proposition follows.
\end{proof}

\begin{figure}
    \centering
    \begin{tikzpicture}

 \filldraw[blue!10] plot [smooth cycle, tension=1] coordinates {(-2,0) (-1,2) (2,1) };
 \draw plot [smooth cycle, tension=1] coordinates {(-2,0) (-1,2) (2,1) };

\draw node at (2,2)[blue, font=\small] {$\omega$} ;

\fill (-0.5, 4.5)  circle[radius=1pt]   node [left, font= \small]  {$x_0$};

\fill (-0.5,2.07)  circle[radius=1pt]  ;
 \draw node at (-0.3,2.3)[ font=\small] {$y$} ;

\fill[brown] (-0.5, 4)  circle[radius=0.15]   node [left=1.5, font= \small]  {$\supp(a(t,\cdot))$};

\fill[brown] (-0.5, 1.5)  circle[radius=0.15]   node [left=1.5, font= \small]  {$\supp(a(0,\cdot))$};

\fill (-0.5, 1.5)  circle[radius=1pt]   node [left=0.3, font= \small]  {$\beta(0)$};

\draw [red] (-0.5,1.5)--(-0.5,4.5) node [right=0.5, font= \small]  {$\beta(t)$}  ; 

  \draw [-latex](-0.5,2.07) -- (-0.5,3) node [right, font=\small]{$\nu(y)$};

 \end{tikzpicture}

    \caption{Support of the geometric optics solution, localizing close to the line $\beta(t)$.}
    \label{fig support of a }
\end{figure}
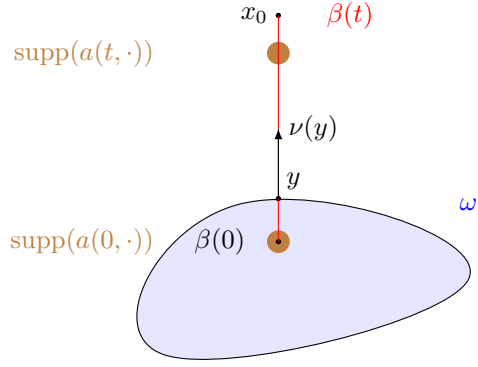

\begin{remark}
\label{rem on optics higher order}
    For the proof of Proposition~\ref{prop diff of sol} we only needed to use geometric optics solution of zeroth order. This has the advantage that it does not consume any regularity for the potential $q$ other than $q \in L^\infty$. However, to conclude the proof of Theorem~\ref{main thm} we will need some geometric optics solutions with better estimates for the error term $R_{j,\sigma}$. This is achieved by using some higher order approximation. The construction is as follows (see~\cite[Section 4.2]{nursultanov2023introduction} for more details).
    We define 
    $$
    w_j=e^{i \sigma (t-x \cdot \nu(y))}A_N+\rp_{j,s}, \quad A_N=\sum_{l=0}^{N}a_l\sigma^{-l},
    $$
  where $a_0=a$ is the same as the one in the proof of Proposition~\ref{prop diff of sol} and $a_l$ for $l\geq 1$ are defined inductively as solutions of the transport equations
  $$
  \p_t a_l+\nu(y) \cdot \nabla_x a_l-\frac{i}{2}(\p^2_t-\Delta+q_j)a_{l-1}=0.
  $$
This gives 
\begin{equation}
\label{ansatz error}
(\p^2_t-\Delta+q_j)(e^{i \sigma (t-x \cdot \nu(y))}A_N)\leq \frac{C}{\sigma^N},
\end{equation}
for a constant $C$ that depends on $\p^{|\alpha|}_x q_j$ for $|\alpha|\leq 2N$. We obtain then the solutions $\tw_j=u_j^{\thh}=(1-\tchi)e^{i \sigma (t-x \cdot \nu(y))}A_N+\rp_{j,\sigma}$ with $\thh=-(\p^2_t-\Delta)(\tchi e^{i \sigma (t-x \cdot \nu(y))}A_N),$ and an error satisfying
\begin{equation*}
    \begin{cases}
    (\p^2_t-\Delta+q_j)\rp_{j,\sigma}=-(\p^2_t-\Delta+q_j)(e^{i \sigma (t-x \cdot \nu(y))}A_N) & \textnormal{in} \:(0,T) \times \R^n,\\
    \rp_{j,\sigma}(0, \cdot)=\p_t \rp_{j,\sigma}(0, \cdot)=0  & \textnormal{in} \: \R^n.
    \end{cases}
\end{equation*}
Using higher regularity estimates for the wave equation and~\eqref{ansatz error} we find then
\begin{equation}
  \label{estim for error high order}
  \norm{\rp_{j,\sigma}}{H^k}{}\leq \norm{(\p^2_t-\Delta+q_j)(e^{i \sigma (t-x \cdot \nu(y))}A_N)}{H^{k-1}}{} \leq C \sigma^{k-1-N},
\end{equation}
with $C$ depending on $\p^{|\alpha|}_x q$ for $|\alpha|\leq \max (2N,k-1)$.

\end{remark}

\subsection{End of the proofs}
\label{section_end_proofs}

\begin{proof}[Proof of Theorem~\ref{main thm}]
    To conclude the proof of the main theorem one needs finally to transfer the estimate of Proposition~\ref{prop diff of sol} concerning the difference of the solutions $u_1,u_2$ to an estimate for the difference of the potentials $q_1,q_2$. To do so we start by writing
\begin{align}
    &(\p^2_t-\Delta+q_1)u^f_1=f=(\p^2_t-\Delta+q_2)u^f_2  \nonumber \\
    \label{equality for potentials}
    &\Longrightarrow u^f_2(q_2-q_1)=(\p_t^2 -\Delta)(u^f_1-u^f_2)+q_1(u^f_1-u^f_2).
\end{align}
Given now $x_0 \in K$ we consider the geometric optics solutions of Remark~\ref{rem on optics higher order}, $\tw_2=u_2^{\thh}$ and estimate
\begin{equation}
\label{estim positive term}
|\tw_2(\sd,x_0)|=\left|e^{i \sigma (\sd-x_0 \cdot \nu(y))}A_N+\rp_{2, \sigma}(\sd,x_0) \right| \geq 1-\frac{C}{\sigma}-|\rp_{2, \sigma}(\sd,x_0)|,
\end{equation}
where we write $\sd=s+\delta$. Using that $\norm{q_j}{C^m}{}\leq M$ for $m\geq 2N$ we see that the constant $C$ above depends on $M,K, \omega,T,n$ only. We remark as well that $\delta>0$ can be chosen uniformly for all $x \in K$.\footnote{The condition on $\delta$ is that it has to be smaller than the injectivity radius of $\omega$ in boundary normal coordinates.} We have for $k>n/2$:
$$
\norm{\rp_{2, \sigma}(\sd)}{C^0}{}\leq \norm{\rp_{2, \sigma}(\sd)}{H^k}{} \leq C \sigma^{k-1-N},
$$
where we used~\eqref{estim for error high order}. Taking $N\geq k$ implies that $\norm{\rp_{2, \sigma}(\sd)}{C^0}{} \to 0$ as $\sigma \to \infty$. This combined with~\eqref{estim positive term} gives the existence of $\sigma_0$ such that for $\sigma\geq \sigma_0$ one has
\begin{equation}
 \label{condition for sigma}
 |\tw_2(\sd,x_0)| \geq \frac{1}{2}.
\end{equation}
Using this in~\eqref{equality for potentials} for $f=\thh$ yields
\begin{align}
\label{point wise ineq for pot}
    |q_2(x_0)-q_1(x_0)| &\leq CM |u_1^{\thh}(\sd,x_0)-u_2^{\thh}(\sd,x_0)| \nonumber \\ &\hspace{4mm}+C|(\p_t^2 -\Delta)(u_1^{\thh}(\sd,x_0)-u^{\thh}_2(\sd,x_0))|.
\end{align}
Since the constant $C$ in~\eqref{point wise ineq for pot} is independent of $x_0 \in K$ the inequality holds for all $x \in K$. We integrate then over $K$ and bound as follows:
\begin{align}
    \norm{q_2-q_1}{L^2(K)}{} &\leq CM \norm{u_1^{\thh}-u^h_2}{L^2((0,T/2)\times K)}{}+C \norm{(\p_t^2 -\Delta)(u_1^{\thh}-u^{\thh}_2)}{L^2((0,T/2)\times K)}{} \nonumber\\
    &\leq C \norm{u_1^{\thh}-u^{\thh}_2}{L^2((0,T/2)\times K)}{}+C\norm{u_1^{\thh}-u^{\thh}_2}{L^2((0,T/2)\times K)}{\frac{1}{3}} \cdot \norm{u_1^{\thh}-u^{\thh}_2}{H^3((0,T/2)\times K)}{\frac{2}{3}} \nonumber \\
    &  \label{first estim for pot} \leq C \norm{u_1^{\thh}-u_2^{\thh}}{L^2((0,T/2)\times K)}{\frac{1}{3}} \norm{\thh}{H^2}{\frac{2}{3}},
    \end{align}
with $C$ depending on $M,T,K$ and thanks to the assumption $\norm{q_j}{C^m(K)}{}\leq M$. We used as well the interpolation inequality $\norm{\cdot}{H^2}{} \leq C \norm{\cdot}{L^2}{1/3} \norm{\cdot}{H^3}{2/3}$ and higher regularity estimates for the wave equation. Given now $\ve>0$ we apply Proposition~\ref{prop diff of sol} which yields that under the assumption $\norm{\Lambda_1-\Lambda_2}{L^2\to L^2}{} \leq \left(\frac{\ve}{\ct(\ve^\ell)} \right)^\ell$ one has
\begin{equation}
\label{second estim for pot}
\norm{u^{\thh}_1-u^{\thh}_2}{L^2((0,T/2)\times K)}{} \leq C \norm{\thh}{H^k}{} \ve.
\end{equation}
 We recall that $\norm{\thh}{H^k}{}$ can be bounded by above by a constant depending only on $M,K, \omega, T, n$. Putting together~\eqref{first estim for pot} with~\eqref{second estim for pot} implies that if $\norm{\Lambda_1-\Lambda_2}{L^2\to L^2}{} \leq \left(\frac{\ve}{\ct(\ve^\ell)} \right)^\ell$ then
\begin{equation*}
    \norm{q_2-q_1}{L^2(K)}{} \leq C \ve^{1/3}, 
\end{equation*}
for a constant $C$ depending only on $T,M,K,\omega$ and $n$. This proves the stability estimate of Theorem~\ref{main thm}, up to changing the power of $\ve$. 

Finally, the condition on the potentials is $\norm{q_j}{C^m}{}\leq M$ with $m\geq 2N$, $2N\geq 2k$ and $k> \frac{n}{2}$. This gives the condition $m > n$ on $m$ and the proof of Theorem~\ref{main thm} is complete. 
\end{proof}

We finally explain how Theorem~\ref{thm_inverse_half} is obtained.

\begin{proof}[Proof of Theorem~\ref{thm_inverse_half}]

The key point is that for the proof of Theorem~\ref{main thm} one in fact only needs to use Theorem~\ref{cor approx control} in the case where $\omega$ is a ball (this comes from the proof of Proposition~\ref{prop point values}). In the situation where observation takes place on the half-space $H$ one needs to use balls centered at some point of $H$ such that their domains of influence meet all the points of $K$. Consider then $\rtt$ with $ \frac{1}{2} \dist(K, H) \leq  \frac{\rtt}{2} \leq 2( \dist (K,H)+\textnormal{diam}(K))$. In particular $\rtt \in [\frac{1}{r_0},r_0]$ for some $r_0$ depending only on $K$ and we impose $T>2r_0$. We can now use as domains $M(\omega,s)$ some balls with radius $3/2 \rtt$ for $\rtt \in [\frac{1}{r_0},r_0]$. In this situation, Theorem 1.2 in~\cite{FO25explicit} (combined with the cost of control result of~\cite{Robbiano:95}) gives the explicit constant $\ct(\ve)=e^{e^{C/\ve}}$ in Theorem~\ref{main thm}. We obtain that if
$$
\norm{\Lambda_1-\Lambda_2}{L^2\to L^2}{} \leq \exp({-e^{C/\ve^\ell}}),
$$
then 
$$\norm{q_2-q_1}{L^2(K)}{}\leq C \ve,$$
where $C>0,\ell>1$ depend only on $T,K,M,n$. The statement then follows by choosing $\ve$ in terms of $\norm{\Lambda_1-\Lambda_2}{L^2\to L^2}{}$.
\end{proof}

   \bigskip

\noindent \textbf{Acknowledgements} The authors were supported by the European Research Council 
of the European Union, grant 101086697 (LoCal), 
and the Research Council of Finland, grants 347715,
353096 (Centre of Excellence of Inverse Modelling and Imaging) 
and 359182 (Flagship of Advanced Mathematics for Sensing Imaging and Modelling). 
Views and opinions expressed are those of the authors only and do not 
necessarily reflect those of the European Union or the other funding 
organizations.

\small \bibliographystyle{alpha}

\bibliography{bibl}

\end{document}